\def\Cal{\mathcal}
\def\D{{\Cal D}}
\def\bbr{{\Bbb R}}
\def\bbh{{\Bbb H}}
\def\bbc{{\Bbb C}}
\def\bbe{{\Bbb E}}
\def\dist{{\hbox{\rm dist}}}
\def\supp{{\hbox{\rm supp}}}
\def\sgn{{\hbox{\rm sgn}}}
\def\const{{\hbox{\rm const}}}
\def\rn{\bbr^n}
\def\part{\partial}
\def\intl{\int\limits}
\def\Gam{\Gamma}
\def\a{\alpha}
\def\om{\omega}
\def\Del{\Delta}
\def\del{\delta}
\def\vp{\varphi}
\def\gam{\gamma}
\def\sig{\sigma}
\def\lam{\lambda}
\def\z{\zeta}
\def\e{\varepsilon}
\def\t{\tau}
\newtheorem{theorem}{Theorem}[section]
\newtheorem{lemma}[theorem]{Lemma}
\theoremstyle{remark}
\newtheorem{remark}[theorem]{Remark}
\numberwithin{equation}{section}
\newcommand{\be}{\begin{equation}}
\newcommand{\ee}{\end{equation}}
\newcommand{\bea}{\begin{eqnarray}}
\newcommand{\eea}{\end{eqnarray}}
\newcommand{\Bea}{\begin{eqnarray*}}
\newcommand{\Eea}{\end{eqnarray*}}
\def\sideremark#1{\ifvmode\leavevmode\fi\vadjust{\vbox to0pt{\vss
 \hbox to 0pt{\hskip\hsize\hskip1em
\vbox{\hsize2cm\tiny\raggedright\pretolerance10000
 \noindent #1\hfill}\hss}\vbox to8pt{\vfil}\vss}}}%
\begin{document}

\title[Spherical Means ]
{Inversion Formulas for the Spherical Means in Constant Curvature Spaces}

\author{Yuri A. Antipov}
\address{Department of Mathematics, Louisiana State University, Baton Rouge,
LA, 70803 USA} \email{antipov@math.lsu.edu}

\author{Ricardo  Estrada}
\address{Department of Mathematics, Louisiana State University, Baton Rouge,
LA, 70803 USA} \email{restrada@math.lsu.edu}

\author{Boris Rubin}
\address{
Department of Mathematics, Louisiana State University, Baton Rouge,
LA, 70803 USA}

\email{borisr@math.lsu.edu}
\thanks{The  research of the first and the second author was supported  by the NSF grants  DMS-0707724 and PHYS-0968448, respectively.
The   third author was supported in part by the  NSF grant DMS-0556157
and the Louisiana EPSCoR program, sponsored  by NSF and the Board of
Regents Support Fund.}

\subjclass[2000]{Primary 44A12; Secondary 92C55, 65R32}



\keywords{ The spherical mean Radon  transform, thermoacoustic
 tomography,  the method of analytic continuation, the
Euler-Poisson-Darboux equation.}

\begin{abstract}   

The work develops further the theory of the following inversion problem, which plays the central role in the rapidly developing area of thermoacoustic tomography and has intimate connections with PDEs 
 and integral geometry: {\it  Reconstruct a function $f$  supported in an $n$-dimensional ball $B$, 
  if the spherical means of $f$  are known over all geodesic spheres centered on the
boundary of $B$.} 
We propose a new unified approach
based on the idea of
analytic continuation. 
 This approach gives explicit inversion formulas
not only for the Euclidean space $\bbr^n$ (as in the original set-up) 
 but also for  arbitrary constant curvature space $X$, including 
  the $n$-dimensional
sphere and
the hyperbolic space. The results  are applied  to inverse problems for a large 
class of Euler-Poisson-Darboux equations in  constant
curvature spaces of arbitrary dimension.
\end{abstract}

\maketitle

\section{Introduction}

\setcounter{equation}{0}
The paper deals with the spherical mean operator, which is also known as the spherical mean Radon transform. Importance of this transformation in analysis and geometry and 
 many of its properties (which are  still surprising!)  were indicated by many authors; see, e.g.,  
 \cite[p. 699]{CH},  \cite{Jo,  St}.  
  In recent years an interest to this object has grown  tremendously in view of a series of challenging  problems. One of them is characterization of sets of injectivity  (and non-injectivity) of this transform; see  \cite{ EKl, AQ1, AQ2, Sr}  and references therein.  Another source of  mathematical problems related to the spherical means is the 
 rapidly developing thermoacoustic 
 tomography (TAT), the revolutionary role of which in medical imaging was pointed out  in many publications; see \cite{AFK}-\cite{AKQ}, \cite {FHR,FPR}, \cite{FR}-\cite{FHM}, \cite{HKN,K}, \cite{KLFA}-\cite{KK}, \cite{ Ku}, \cite{ OK}-\cite{PS2}, \cite{ Wa,XW}.  The present investigation belongs to this area.

{\bf Setting of the problem and motivation.}  Let $f$ be an infinitely differentiable  function with compact support in the open ball $B=\{x \in \rn: |x|<R\}$;  $ \partial B$ is the boundary 
 of $B$.   
We consider the spherical mean Radon transform $Mf$ which integrates $f$ over spheres centered on  $ \partial B$:  
\be\label {mf}
(Mf)(\xi, t)=\frac{1}{\sig_{n-1}}\int_{S^{n-1}} f (\xi
-t\theta)\, d\theta,\ee 
where
$$ \xi \in \partial B, \qquad t\in
\bbr_+=(0,\infty),$$ 
$S^{n-1}$ is the unit
sphere in $\rn$ with the area $\sig_{n-1}$,
and $d\sig$ stands for the usual Lebesgue measure
on $S^{n-1}$. For the classical Radon transforms, their modifications,  and applications  see, e.g., 
 \cite{Dea, Ep, GGG, Eh, He, N}.

The general 
 problem  of reconstructing $f$ from known data $(Mf)(\xi, t)$ on the cylinder  $\partial B \times \bbr_+$ is an immediate consequence of 
 the following commonly accepted mathematical model of TAT in $\bbr^3$  (see, e.g., \cite {KK, Wa, HKN} and references therein): 
 
 {\it Given a function $c(x)$,  the speed of the ultrasound propagation in the tissue, and a function
 $g(\xi, t)$, the measured  value of the pressure at the time $t$  at the transducerÕs location $\xi\in S^2$, find a function $f(x)$, the initial pressure distribution $p(x,0)$
 (the TAT image), if 
 \be\label {mmo}   \left \{\begin{array} {ll} p_{tt} = c^2(x)\, \Del  p \quad & \mbox{\rm for all} \;  t\ge 0, \;x\in \bbr^3,\\
 p(x,0)= f(x),  \; p_t(x,0) = 0 \quad &  \mbox{\rm for all} \; x\in \bbr^3,\\p(\xi,t)=g(\xi,t) \qquad &  \mbox{\rm for all} \; \xi\in S^2 \, (\subset \bbr^3), \; t\ge 0.\\ \end{array}
\right.\ee
}
Here, $p_t $ and 
$p_{tt}$ are the first and second time derivatives, and $\Del$  is the Laplace operator with respect to the spatial variable $x$.

This problem  admits immediate generalization to arbitrary dimensions, more general Riemannian   spaces, and a broad class of  differential equations of the Euler-Poisson-Darboux  (EPD) type.  In the Euclidean case,  a thorough discussion of main inversion methods  in terms of their assumptions and computational features can be found in  \cite{KK, Wa}.  In the important particular case of constant speed $c(x)$, solution to   this problem is equivalent to reconstruction of $f$ from its spherical mean   (\ref{mf}).
 
Explicit inversion formulas for $Mf$  are of
particular  interest.    For $n$ odd, such formulas were obtained by Finch, Patch, and
 Rakesh  in \cite{FPR}.  Another derivation was suggested by Palamodov  \cite[Section 7.5]{P}; see also \cite {FR2, FR3, XW}. The corresponding formulas for  $n$ even were
 obtained by Finch,  Haltmeier, and  Rakesh in \cite{FHR}.  An   explicit inversion formula which relies on  completely different ideas 
 and covers both odd and even cases, was suggested by  Kunyansky \cite{Ku}.
 
In spite of the elegance and ingenuity,  the derivation of the existing   inversion formulas for $Mf$  is pretty involved, and basic ideas behind it remain mysterious.  In view of practical importance, it would be desirable to find an independent simple proof of known formulas and thus check their correctness. Moreover,  the prospective new method should  be applicable to more general geometric and analytic settings and thus lead to further progress.

A simple proof for $n$ odd was suggested by the third author \cite {Ru00a},  who suggested to treat $M$ as a member of a  certain analytic family of operators and applied the results to  the inverse problem of type (\ref{mmo})  for the more general  EPD  equation in the case $c(x)\equiv \const$.

In the present article we suggest a new approach, which is conceptually simple and leads to inversion formulas for $Mf$  {\it for all}  $n\ge 2$. 
As in \cite {Ru00a}, the  key idea is  analytic continuation, however, the reasoning is different. We extend our method  to the similar problem for spherical means on the $n$-dimensional sphere and the hyperbolic space, where the theory of  EPD  equations is also well-developed. This extension seems to be new and paves the way to diverse settings,  when  the relevant geodesic balls and spherical means   are considered in more general Riemannian spaces.

Regarding  generalizations to general Riemannian spaces, some comments are in order.   The corresponding wave equations and their 
EPD generalizations were studied in \cite{LP, KI1, KI2}, \cite{O1}-\cite{O3}. For example, the wave equation on the $n$-dimensional sphere $S^n$   has the form \cite{LP}
\be\label{maw}
 \del_x u = u_{\om\om} + \left(\frac{n-1}{2}\right)^2 u,  \qquad (x,\om)\in  
S^n \times  (0,\pi),\ee
where  $\del_x$ denotes   the  Beltrami-Laplace operator.
The Cauchy problem
 for the relevant EPD equation 
\be\label {papo}  \tilde\square_\a u=0, \qquad u(x,0) = f(x), \quad  u_\om (x, 0) = 0,\ee
where
\be\label{wder}
\tilde\square_\a u=\del_x u - u_{\om\om} - (n-1+2\a) \cot \om\, u_\om + \a (n-1+\a)u,
\ee
and various modifications  were discussed  in  \cite{C, CS, F, KI1, KI2, O2}.
 The problem  (\ref{papo})  for the case  $\a = 0$, corresponding to the usual 
Darboux  equation, was studied  by  Olevskii  \cite{O1} and also    by
  Kipriyanov and  Ivanov   \cite{KI1}.
Our definition of the EPD-equation  on $S^n$ differs from that in 
 \cite{KI1}  and agrees with  \cite{O2}. The particular case $\a=(1-n)/2$,  corresponding to the wave equation (\ref{maw}),   can be regarded as the spherical analogue of the  TAT model  (\ref{mmo}) with constant speed.

\smallskip

{\bf Plan of the paper and main results.}   Section 2 contains preliminaries. Here the main statement is Lemma  \ref{lem3}.
For convenience of the reader and  better treatment of the subject, we supply this lemma with  alternative proofs, which are based on different ideas and use different tools, while leading to the same result. All these are presented  in Appendix.   Section 3 contains derivation of inversion formulas for $Mf$ in the Euclidean case.  The main inversion results are presented in Theorems \ref{mku} and  \ref{Theorem n2.1};  see also modified inversion formulas (\ref{inv1m}),  (\ref{inv2m}).  
 The results of Section 3 are applied in Section 4 to  
  the Cauchy problem for the Euler-Poisson-Darboux equation    
\begin{equation}
\square_{\a}u\!\equiv\!\Delta u\!-\!u_{tt}\!-\!\frac{n\!+\!2\alpha\!-\!1}{t}\,u_{t}%
\!=\!0,\quad u(x,0)\!=\!f(x),\;  u_{t}(x,0)\!=\!0,\label{zas}%
\end{equation}%
  where $f$ is a smooth function with compact support in the ball $B$. 
Using the results of Section 3  combined with known properties of Erd\'{e}lyi-Kober fractional integrals,  we give explicit solution (Theorem \ref{Th4.1})  to the following inverse problem:   

\textit{ Given   the trace $u\left( \xi,t\right)  $
 of  the solution  of  (\ref{zas})  for all  $\left(\xi,t\right)$  on the cylindrical surface $\partial B\times\mathbb{R}_{+}$,  reconstruct $f(x)$. }

The particular case $\a=(1-n)/2$  gives explicit solution to the TAT problem (\ref{mmo})
 with constant speed $c(x)\equiv 1$.

 The spherical mean Radon transform
 on the $n$-dimensional unit sphere $S^n$  in $\bbr^{n+1}$  is studied in Section 5.  Inversion formulas 
 for this transform are given  in Theorems \ref{774}  and \ref{Theorem n2.1s2}.
  The relevant inverse problem  for the EPD
 equation  on $S^n$  is solved 
 in Section 6.   Section 7 contains derivation of   inversion formulas for the spherical mean Radon transform in the  $n$-dimensional hyperbolic space. Here the main results are given by Theorems \ref{774h} and  \ref  {Theorem n2.1h} .

 {\bf Acknowledgements.}  The third author  is  grateful to  Mark Agranovsky,  who encouraged him to study this problem,  and also  to Peter
 Kuchment  and Leonid Kunyansky for useful  discussions. Special thanks go to David Finch,  who shared with us his  knowledge of the subject.

\section{Auxiliary statements}

 {\bf Notation.} We use   abbreviation $a.c.$  to denote analytic continuation;  $\sig_{n-1}\!=\!2\pi^{n/2}/\Gam (n/2)$ is the area of the unit sphere $S^{n-1}$  in $\rn$. We write  $\;d\theta$     ($d\xi$)  for the usual Lebesgue measure
on $S^{n-1}$  (on $\partial B$,  resp.);  $[a]$ denotes the integer part of a real number $a$;   $(\cdot )^\lam_+$  means  $(\cdot )^\lam$ if the expression in parentheses is positive and zero, otherwise.

We will need the following  lemmas.

\begin{lemma} \label {lem1} Let $\vp \in C_c^\infty (\bbr)$.

\noindent {\rm (i)} If $m=0,1,2,
\ldots$, then
\be\label {lab1c}
\underset
{\a=-2m}{a.c.} \intl_\bbr \frac{|t|^{\a -1}}{\Gam ( \a/2)} \, \vp (t)\, dt=c_{m,1}\,  \vp^{(2m)} (0), \quad c_{m,1}=\frac{(-1)^m\, m!}{(2m)!}.\ee

\noindent {\rm (ii)} If $m=1,2,
\ldots$, then
\bea\label {lab1z}
\underset
{\a=1-2m}{a.c.} \intl_\bbr \frac{|t|^{\a -1}}{\Gam ( \a/2)} \, \vp (t)\, dt&=&c_{m,2} \intl_\bbr \frac{\vp^{(2m-1)} (t)}{t}\, dt\\
&=&-c_{m,2} \intl_\bbr \vp^{(2m)} (t)\,\log |t|\, dt,\label {lab1}\eea
where
$c_{m,2}=(\Gam (1/2 -m) (2m-1)!)^{-1}$ and the integral on the right-hand side of (\ref {lab1z}) is understood in the principal value sense.
\end{lemma}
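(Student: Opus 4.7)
The plan is to realize $I(\alpha):=\int_{\bbr}|t|^{\alpha-1}\vp(t)\,dt$ as a meromorphic function of $\alpha$ with at worst simple poles at $\alpha\in\{0,-1,-2,\dots\}$, and then exploit the pole/zero structure of $\Gam(\alpha/2)$ at the even negative integers. First I would symmetrize, writing
$$
I(\alpha)=\intl_0^\infty t^{\alpha-1}\psi(t)\,dt,\qquad \psi(t):=\vp(t)+\vp(-t),
$$
so that $\psi$ is smooth, compactly supported, and even. The usual Taylor-subtraction decomposition
$$
I(\alpha)=\intl_0^1 t^{\alpha-1}\Bigl(\psi(t)-\suml_{k=0}^{N-1}\frac{\psi^{(k)}(0)}{k!}t^k\Bigr)dt+\intl_1^\infty t^{\alpha-1}\psi(t)\,dt+\suml_{k=0}^{N-1}\frac{\psi^{(k)}(0)}{k!(\alpha+k)}
$$
then extends $I$ to $\operatorname{Re}\alpha>-N$ with simple poles at $\alpha=-k$ of residue $\psi^{(k)}(0)/k!$. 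By the evenness of $\psi$, these residues vanish for odd $k$ and equal $2\vp^{(2m)}(0)/(2m)!$ for $k=2m$.

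For part (i), $\Gam(\alpha/2)$ has a simple pole at $\alpha=-2m$ with residue (in $\alpha$) equal to $2(-1)^m/m!$, so $1/\Gam(\alpha/2)$ has a simple zero there whose leading coefficient is $(-1)^m m!/2$. Multiplying this against the principal part $2\vp^{(2m)}(0)/[(2m)!(\alpha+2m)]$ of $I(\alpha)$ cancels the singularity and leaves exactly $\frac{(-1)^m m!}{(2m)!}\vp^{(2m)}(0)=c_{m,1}\vp^{(2m)}(0)$, as claimed.

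For part (ii), at $\alpha=1-2m$ the integral $I(\alpha)$ is already regular—the would-be residue $\psi^{(2m-1)}(0)/(2m-1)!$ vanishes because $\psi$ is even—and $\Gam(1/2-m)$ is finite and nonzero, so it suffices to compute $I(1-2m)$ itself. I would integrate by parts $2m-1$ times for $\operatorname{Re}\alpha$ large to obtain
$$
I(\alpha)=\frac{(-1)^{2m-1}}{\alpha(\alpha+1)\cdots(\alpha+2m-2)}\intl_0^\infty\psi^{(2m-1)}(t)\,t^{\alpha+2m-2}\,dt,
$$
and then continue analytically to $\alpha=1-2m$. This is legitimate because $\psi^{(2m-1)}(0)=0$ forces $\psi^{(2m-1)}(t)=O(t)$ near the origin, so the limiting integral $\intl_0^\infty\psi^{(2m-1)}(t)/t\,dt$ is absolutely convergent; a change of variable in $t\mapsto -t$ identifies it with $\mathrm{p.v.}\intl_{\bbr}\vp^{(2m-1)}(t)/t\,dt$. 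The rational prefactor evaluates to $1/(2m-1)!$ at $\alpha=1-2m$, and division by $\Gam(1/2-m)$ produces the first form (\ref{lab1z}). The logarithmic form (\ref{lab1}) then follows from one further IBP in the principal-value sense, the symmetric boundary contribution at $\pm\e$ being $[\vp^{(2m-1)}(-\e)-\vp^{(2m-1)}(\e)]\log\e=O(\e\log\e)$ and hence negligible.

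The main technical obstacle is bookkeeping the constants so that they match the stated $c_{m,1}$ and $c_{m,2}$: the factor of $2$ introduced by passing to $\psi$ must cancel against the factor of $2$ in the residue of $\Gam(\alpha/2)$ in part (i), while in part (ii) the product $\alpha(\alpha+1)\cdots(\alpha+2m-2)$ must telescope to $-(2m-1)!$ at $\alpha=1-2m$ and combine correctly with the sign $(-1)^{2m-1}$. A secondary point is verifying that the IBP-based representation genuinely delivers the analytic continuation across $\alpha=1-2m$ and that the resulting integral equals the Cauchy principal value—both routine but each requiring a few lines of care.
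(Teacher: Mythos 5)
Your proof is correct, and it reaches the result by a genuinely different route than the paper's. The paper cites Gel'fand--Shilov for part (i) and proves only part (ii), working distributionally: it uses the identity $|t|^{\a-1}=\frac{\Gam(\a)}{\Gam(\a+2m-1)}\,\bigl(|t|^{\a+2m-2}\,\sgn t\bigr)^{(2m-1)}$, throws the $2m-1$ derivatives onto $\vp$, and continues to $\a=1-2m$, where the family $|t|^{\lam}\sgn t$ is regular with value $\mathrm{p.v.}\,1/t$ and the quotient of Gamma functions supplies $c_{m,2}$. You instead continue the scalar integral $I(\a)$ directly: the symmetrization $\psi(t)=\vp(t)+\vp(-t)$ plus Taylor subtraction exhibits the full pole structure (simple poles at $\a=-2m$ with residue $2\vp^{(2m)}(0)/(2m)!$, regularity at $\a=1-2m$ by evenness), which settles part (i) by pole--zero cancellation against $1/\Gam(\a/2)$ --- a computation the paper never writes out --- and your iterated integration by parts for part (ii) is in substance the paper's identity in dual form: your rational prefactor $(-1)^{2m-1}/[\a(\a+1)\cdots(\a+2m-2)]$ is exactly $-\Gam(\a)/\Gam(\a+2m-1)$, the derivatives landing on the test function by parts rather than being peeled off the kernel. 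The paper's formulation buys brevity and an immediate identification of the limiting distribution as $\mathrm{p.v.}\,1/t$; yours buys a self-contained, elementary proof of both parts, with the evenness of $\psi$ transparently explaining why the odd points are regular while the even ones are poles. Your constants all check: $\frac{2\vp^{(2m)}(0)}{(2m)!}\cdot\frac{(-1)^m m!}{2}=c_{m,1}\vp^{(2m)}(0)$, the prefactor equals $1/(2m-1)!$ at $\a=1-2m$, and the $O(\e\log\e)$ boundary estimate legitimizes the final passage to the logarithmic form (\ref{lab1}).
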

\begin{proof} Both statements summarize known facts from  \cite [Chapter 1, Sec. 3] {GSh1}. For instance, {\rm (ii)} can be proved as follows. Using the equality
$$
|t|^{\a -1}=\frac{\Gam (\a)}{\Gam (\a+2m-1)}\, (|t|^{\a +2m-2}\, \sgn t)^{(2m-1)},
$$
we write the left-hand side of (\ref{lab1z}) in the form
$$
-\underset
{\a=1-2m}{a.c.}\,\frac{\Gam (\a)}{\Gam (\a+2m-1)\, \Gam ( \a/2)}\,  (|t|^{\a +2m-2}\, \sgn t, \vp^{(2m-1)}(t)).
$$
The latter yields the principal value integral
$$
\frac{1}{\Gam (1/2 -m) (2m-1)!} \, \intl_\bbr \frac{\vp^{(2m-1)}(t)}{t}\, dt,$$
which coincides with  (\ref{lab1}).
\end{proof}

\begin{lemma} \label {lem3}  Let $n>2, \; |h|< 1$. 

\noindent {\rm (i)} The integral
\be\label {den}
g_\a (h)=\frac{1}{\Gam (\a/2)} \intl_{-1}^1 |t \!-\!h|^{\a -1}\,(1-t^2)^{(n-3)/2}\, dt , \qquad  Re\, \a >0,
\ee
extends as an entire function of $\a$ and this extension represents a $C^\infty$ function of $h$ uniformly in $\a\in K$ for any compact subset $K$ of the complex plane.

\noindent {\rm (ii)}  Moreover,
\be\label {lab2} 
\underset
{\a=3-n}{a.c.}\, g_\a (h)= \Gam ((n-1)/2).\ee
 \end{lemma}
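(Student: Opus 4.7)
My plan is to localize the singularity of $|t-h|^{\alpha-1}$ at $t=h$ by a smooth cutoff so that (i) reduces to Lemma \ref{lem1} applied to a $C_{c}^{\infty}$ test function plus a trivial ``tails'' estimate, and then to evaluate (ii) case by case according to the parity of $n$.

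For (i), fix a compact $K \subset (-1,1)$ and choose $\chi \in C_c^\infty((-1,1))$ with $\chi \equiv 1$ on an open neighborhood of $K$. Decompose $g_\alpha(h) = g^{(1)}_\alpha(h) + g^{(2)}_\alpha(h)$ by inserting $\chi(t)$ and $1 - \chi(t)$, respectively, into the integrand. In $g^{(2)}_\alpha(h)$ the distance $|t-h|$ stays bounded away from zero uniformly for $h \in K$, so the integrand is smooth in $(t,\alpha,h)$, the integral converges absolutely for every $\alpha \in \bbc$, and entirety in $\alpha$ together with $C^{\infty}$ dependence in $h$ is immediate. In $g^{(1)}_\alpha(h)$ the test function $\chi(t)(1-t^{2})^{(n-3)/2}$ belongs to $C_{c}^{\infty}(\bbr)$, and the standard fact (compatible with Lemma \ref{lem1}) that the pairing $\varphi \mapsto \int |t|^{\alpha-1}\varphi(t)\,dt/\Gamma(\alpha/2)$ extends from $\mathrm{Re}\,\alpha > 0$ to an entire function of $\alpha$ for every $\varphi \in C_c^\infty(\bbr)$, applied with $\varphi$ translated by $h$, yields analyticity in $\alpha$; smoothness in $h$ uniform on compacta then follows from translation invariance.

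For (ii), write $m = [(n-2)/2]$. In the odd case $\alpha = 3-n = -2m$, the factor $1/\Gamma(-m)$ vanishes while the integral defining $g^{(2)}$ is finite, so $g^{(2)}_{3-n}(h) = 0$; Lemma \ref{lem1}(i) applied after translation gives
\[
g^{(1)}_{3-n}(h) = c_{m,1}\,\frac{d^{2m}}{dt^{2m}}\bigl[\chi(t)(1-t^{2})^{(n-3)/2}\bigr]\Big|_{t=h}.
\]
Since $\chi \equiv 1$ near $h$, the derivative equals the $(2m)$-th derivative of the polynomial $(1-t^{2})^{m}$ at $t=h$, namely $(-1)^{m}(2m)!$; multiplying by $c_{m,1} = (-1)^{m} m!/(2m)!$ collapses the answer to $m! = \Gamma((n-1)/2)$.

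The main technical obstacle is the even case, where $\alpha = 3-n = 1-2m$ with $m \ge 1$ and $1/\Gamma(1/2-m)$ is nonzero, so both pieces contribute. Lemma \ref{lem1}(ii) turns $g^{(1)}_{3-n}$ into a principal-value integral against the $(2m-1)$-th derivative of $\chi(t)(1-t^{2})^{(n-3)/2}$, while $g^{(2)}_{3-n}$ is a regularized integral of $|t-h|^{-2m}(1-\chi(t))(1-t^{2})_{+}^{m-1/2}$. Showing the sum equals $\Gamma((n-1)/2)$ independently of $\chi$ requires either an explicit cancellation computation (carrying out the principal-value integration by parts near the support of $1-\chi$ and matching boundary contributions with the $|t-h|^{-2m}$ tail) or, more cleanly, an analytic continuation argument in the parameter $\nu = (n-3)/2$: the cutoff argument extends verbatim to complex $\nu$ with $\mathrm{Re}\,\nu > -1$, so $\nu \mapsto g_{-2\nu}(h)$ is analytic in a right half-plane, agrees with the entire function $\Gamma(\nu + 1)$ on every non-negative integer $\nu$ by the odd-$n$ case, and a Carlson-type uniqueness theorem (after checking the requisite growth on vertical lines, which can be verified from the closed form of $g_\alpha(0)$ obtained via the Beta function) forces equality for all $\nu$, in particular for the half-integers corresponding to even $n$.
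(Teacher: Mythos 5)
Your part (i) and the odd-$n$ half of part (ii) are sound. The cutoff splitting $g_\a=g^{(1)}_\a+g^{(2)}_\a$, with Lemma \ref{lem1} applied to $\chi(t)(1-t^2)^{(n-3)/2}\in C_c^\infty((-1,1))$ and the observation that $g^{(2)}_{3-n}(h)=0$ for odd $n$ because $1/\Gam((3-n)/2)=0$, is essentially a pointwise version of the paper's ``alternative proof of (ii)'', which runs the same convolution idea in the weak sense against a test function $\psi$; the paper proves (i) differently, through an explicit hypergeometric representation of $g_\a$, and its main proof of (ii) evaluates $g_\a$ in closed form via Mellin transforms and residues. Up to this point your route is correct and, for (i) and odd $n$, arguably more elementary.

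The even case, however, is a genuine gap: you name two strategies and carry out neither, and the one you call cleaner fails as stated. Write $\nu=(n-3)/2$ and $F_h(\nu)=\underset{\a=-2\nu}{a.c.}\,g_\a(h)$. To run Carlson's theorem on $F_h(\nu)/\Gam(\nu+1)-1$ you must verify exponential type strictly less than $\pi$ on vertical lines \emph{for the fixed $h$ at hand}; your proposal to check this ``from the closed form of $g_\a(0)$'' is beside the point, since the Beta-function identity $g_\a(0)=\Gam(\nu+1)/\Gam(\a/2+\nu+1)$ controls only $h=0$, while the entire content of the lemma is that the continued value does not depend on $h$. Worse, the natural bounds give type exactly $\pi$: by the reflection formula, $g^{(2)}_{-2\nu}(h)/\Gam(\nu+1)=-\frac{\sin \pi\nu}{\pi}\intl_{-1}^1(1-\chi(t))\,|t-h|^{-2\nu-1}(1-t^2)^\nu\,dt$, where the integral is merely bounded (it does not decay exponentially in $|{\rm Im}\,\nu|$), and $\sin\pi\nu$ is precisely the classical counterexample to Carlson --- type exactly $\pi$, vanishing at every integer. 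Only cancellation between the $g^{(1)}$ and $g^{(2)}$ pieces could lower the type, and establishing that cancellation is exactly the work being deferred. Compare with how the paper closes this case: in the weak formulation the continuation becomes $(-1)^m c_{m,2}\int\psi(h)\,q^{(2m-1)}(h)\,dh$ with $q(h)=\mbox{\rm p.v.}\intl_{-1}^1 (t^2-1)^m\,dt/((t-h)\sqrt{1-t^2})$, and the Chebyshev identity (\ref{pvn}) shows $q$ is a polynomial with leading term $\pi h^{2m-1}$, so $q^{(2m-1)}\equiv\pi(2m-1)!$ and the constant $\Gam((n-1)/2)$ drops out. Your ``explicit cancellation computation'' would have to reproduce a calculation of this kind (made messier by the cutoff $\chi$); until that is done, your part (ii) is proved only for odd $n$.
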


 The proof of this lemma is given in Appendix.

\section{The Euclidean case. Derivation of the Inversion Formula}\label {sec3}

We recall that our aim  is to reconstruct a $C^\infty$ function $f$ supported in
the  ball $B=\{x \in \rn: |x|<R\}$ provided that 
the spherical means
\[
(Mf)(\xi, t)=\frac{1}{\sig_{n-1}}\intl_{S^{n-1}} f (\xi
-t\sig)\, d\sig,\qquad (\xi, t) \in \partial B \times \bbr_+,\]
are known for all spheres centered on the boundary $\partial B $ of $B$ (Fig. 1).

\begin{figure}
\centerline{
\scalebox{0.6}{\includegraphics{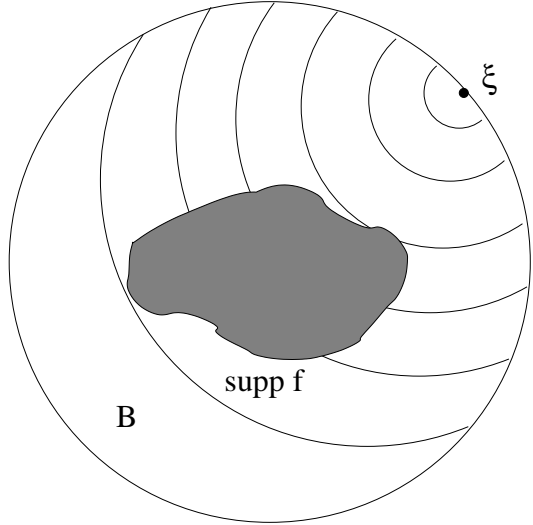}}}
\caption{The Euclidean case.}
\label{fig1}
\end{figure} 

We introduce the ``back-projection" operator $P$ that sends a function $F(\xi, t)$  on $\partial B \times \bbr_+$ to a function $(PF)(x)$ on $B$ by the formula
 \be\label{pfn}
(PF)(x)= \frac{1}{|\partial B|}\intl_{\partial B } F(\xi, |x-\xi|)\,
d\xi, \qquad x \in B,\ee
where $d\xi$ stands for the surface element of $\partial B$ and  $|\partial B|$ denotes the area of $\partial B$.

\subsection{The case $n>2$} Consider the following analytic family of operators
\be
(N^\a f)(\xi, t)=\intl_B \frac{|t^2 -|y-\xi|^2|^{\a -1}}{\Gam (\a/2)}\, f(y)\, dy, \ee
$$ 
(\xi, t) \in \partial B \times \bbr_+, \qquad Re \, \a >0.
$$

\begin{lemma}\label{aau} Let $f$ be an infinitely differentiable function
supported in $B=\{x \in \rn: |x|<R\}$. Then
\be\label {anal} \underset
{\a=3-n}{a.c.} (PN^\a
f)(x)=\lam_n\, \intl_B \frac {f(y)}{|x-y|^{n-2}}\,dy,\ee
\be\label {822}
\lam_n=(2R)^{2-n}\pi^{-1/2}\, \Gam (n/2).\ee
\end{lemma}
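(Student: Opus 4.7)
The plan is to rewrite $(PN^\a f)(x)$ as a single integral over $y\in B$ whose integrand is an analytic family in $\a$, and then to analytically continue inside the integral by means of Lemma \ref{lem3}. For $Re\,\a>0$ the double integral converges absolutely, so by Fubini
$$ (PN^\a f)(x) = \intl_B f(y)\, \left[\,\frac{1}{|\part B|\,\Gam(\a/2)}\, \intl_{\part B} \bigl||x-\xi|^2 - |y-\xi|^2\bigr|^{\a-1}\, d\xi\, \right] dy. $$
Parametrizing $\xi=R\om$, $\om\in S^{n-1}$, and using the identity
$$ |x-R\om|^2 - |y-R\om|^2 = |x|^2-|y|^2 - 2R(x-y)\cdot\om, $$
the inner integral depends only on $(x-y)\cdot\om/|x-y|$. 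The standard slicing formula for integrals of direction-radial functions on $S^{n-1}$ (valid for $n>2$) then reduces the bracket to $(\sig_{n-2}/\sig_{n-1})(2R|x-y|)^{\a-1}\,g_\a(h(x,y))$, where
$$ h(x,y)=\frac{|x|^2-|y|^2}{2R|x-y|} $$
and $g_\a$ is the integral of Lemma \ref{lem3}. Hence
$$ (PN^\a f)(x) = \frac{\sig_{n-2}}{\sig_{n-1}}\,\intl_B f(y)\,(2R|x-y|)^{\a-1}\,g_\a(h(x,y))\, dy. $$

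Next I would verify the hypothesis $|h|<1$ of Lemma \ref{lem3}. For $x,y\in B$ the reverse triangle inequality $||x|-|y||\le |x-y|$ and the bound $|x|+|y|<2R$ give
$$ |h(x,y)| \le \frac{(|x|+|y|)\,||x|-|y||}{2R\,|x-y|} \le \frac{|x|+|y|}{2R} < 1, $$
and in fact $h$ stays in a compact subset of $(-1,1)$ for $y\in \supp(f)$ and $x$ in any compact subset of $B$. By Lemma \ref{lem3}(i), $\a\mapsto g_\a(h(x,y))$ is entire, with values bounded uniformly in $(x,y)$ on compact $\a$-sets. Since $|x-y|^{\a-1}$ is locally integrable on $B$ as a function of $y$ whenever $Re\,\a>1-n$ — a half-plane that contains $\a=3-n$ because $n>2$ — the displayed integrand is an analytic family of locally integrable functions and the integral is holomorphic in $\a$ on $\{Re\,\a>1-n\}$. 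Thus the analytic continuation can be performed inside the integral, and Lemma \ref{lem3}(ii) yields $\underset{\a=3-n}{a.c.}\,g_\a(h)=\Gam((n-1)/2)$. Substituting and simplifying via $\sig_{n-1}=2\pi^{n/2}/\Gam(n/2)$ and $\sig_{n-2}=2\pi^{(n-1)/2}/\Gam((n-1)/2)$ collapses the prefactor to $\lam_n=(2R)^{2-n}\pi^{-1/2}\Gam(n/2)$, as claimed.

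The main obstacle is precisely the interchange of analytic continuation in $\a$ with the $y$-integration. Lemma \ref{lem3}(i) is tailor-made for this: the uniform $C^\infty$-smoothness of $g_\a(h)$ in $h$ on compact $\a$-sets supplies the domination needed to pull the analytic continuation under the integral sign via the standard holomorphy-under-the-integral argument. Everything else is a direct spherical-slicing computation and a routine manipulation of $\Gam$-function identities.
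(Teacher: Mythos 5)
Your proof is correct and follows essentially the same route as the paper: Fubini, reduction of the $\partial B$-integral to $\sig_{n-2}\,g_\a(h)$ by rotation invariance, the bound $|h|<1$, and Lemma \ref{lem3} to evaluate the continuation at $\a=3-n$. The only differences are that you spell out two points the paper leaves terse — the reverse-triangle-inequality proof that $|h|$ stays in a compact subset of $(-1,1)$, and an explicit holomorphy-on-the-half-plane $\{Re\,\a>1-n\}$ argument for interchanging analytic continuation with the $y$-integral, where the paper simply cites \cite[Lemma 1.17]{Ru96}.
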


\begin{proof} For $Re \, \a >0$, changing the order of
integration, we obtain
$$ (PN^\a f)(x)=\intl_B f(y)\,k_\a (x,y)\, dy,
$$ where
 \bea k_\a (x,y)&=&\frac{1}{|\partial B|\, \Gam
 (\a/2)}\intl_{\partial B } 
 ||x\!-\!\xi |^2\!-\!|y\!-\!\xi |^2|^{\a -1}\, d\xi\nonumber\\
&=&
 \frac{1}{\sig_{n-1}\, \Gam
 (\a/2)}\intl_{S^{n-1}} 
||x|^2-|y|^2-2R\theta \cdot
(x-y)|^{\a -1}\, d\theta\nonumber\\
&=& \frac{(2R\,|x-y|)^{\a-1}}{\sig_{n-1}}\, \intl_{ S^{n-1}}\frac{|\theta \cdot \sig  -h|^{\a -1}}{\Gam (\a/2)}\, d\theta,
\label{pok}\eea
$$\sig=\frac{x-y}{|x-y|}, \qquad h=\frac{|x|^2-|y|^2}{2R\, |x-y|}.$$
By the rotation invariance of the inner product, the integral  in (\ref{pok}) is independent of $\sig$ and can be written as
$$
\frac{\sig_{n-2}}{\Gam (\a/2)} \intl_{-1}^1  |t \!-\!h|^{\a -1}\,(1-t^2)^{(n-3)/2}\, dt=\sig_{n-2}\,g_\a (h);$$
cf. (\ref{den}).  Note that $|h|< 1-\del$ for some $\del>0$ because $x$ and $y$ belong to the support of $f$ and the latter is separated from the boundary of $B$.  Hence,   Lemma \ref{lem3}   yields
\bea\label {kljm}
 &&\underset
{\a=3-n}{a.c.} \,(PN^\a
f)(x)=\frac{(2R)^{2-n}\, \sig_{n-2}}{\sig_{n-1}}\intl_B \frac {f(y)}{|x-y|^{n-2}}\, \underset
{\a=3-n}{a.c.} \, g_\a (h)\, dy\nonumber\\
&&=\lam_n\, \intl_B \frac {f(y)}{|x-y|^{n-2}}\,dy,\qquad \lam_n=(2R)^{2-n}\pi^{-1/2}\, \Gam (n/2)\nonumber\eea
(to justify interchange of integration and analytic continuation, the reader may consult, e.g., \cite[Lemma 1.17 ]{Ru96}).
\end{proof}

Let us obtain another representation of the analytic continuation  of $PN^\a f$, now, in terms of the spherical means of $f$.

\begin{lemma}\label{aau2} Let  $f$ be an infinitely differentiable function
supported in $B=\{x \in \rn: |x|<R\}$,
\be\label{amyw}
D=\frac {1}{2t}\frac{d}{dt}, \qquad \del_{n}=
\frac{(-1)^{[n/2-1]}\, \Gam((n-1)/2)}{(n\!-\!3)!}.\ee

\noindent {\rm (i)} If $n=3,5, \ldots\, $, then

\be\label {anx} \underset
{\a=3-n}{a.c.} (PN^\a
f)(x)=\frac{\del_{n}}{2R^{n-1}} \,  \intl_{\partial B} D^{n-3} [t^{n-2} (Mf)(\xi, t)]\Big |_{t=|x-\xi|}\, d\xi.
\ee

\noindent {\rm (ii)} If $n=4,6,\ldots\,$, then

\be\label {anx2} \underset
{\a=3-n}{a.c.} (PN^\a
f)(x)=-\frac{\del_{n}}{\pi\,R^{n-1}}  \intl_{\partial B}  d\xi
\intl_0^{2R} t\,D^{n-2} [t^{n-2} (Mf)(\xi, t)]\,\log |t^2-|x-\xi|^2|\, dt.\ee
\end{lemma}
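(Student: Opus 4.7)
The plan is to reduce everything to Lemma \ref{lem1} via a change of variables, as follows. First, writing the integral in $N^\a f$ in polar coordinates centered at $\xi$, we have
\[
(N^\a f)(\xi,t)=\sigma_{n-1}\intl_0^{2R}\frac{|t^2-s^2|^{\a-1}}{\Gam(\a/2)}\, s^{n-1}\,(Mf)(\xi,s)\,ds.
\]
The substitution $u=s^2$ converts this into an integral with the one-variable kernel $|t^2-u|^{\a-1}/\Gam(\a/2)$, namely
\[
(N^\a f)(\xi,t)=\frac{\sigma_{n-1}}{2}\intl_{\bbr}\frac{|t^2-u|^{\a-1}}{\Gam(\a/2)}\,\psi(u)\,du,\qquad \psi(u):=u^{(n-2)/2}(Mf)(\xi,\sqrt{u}).
\]
The evenness of $(Mf)(\xi,\cdot)$, together with the fact that $f$ is supported strictly inside $B$ (so the support of $\psi$ is bounded away from $0$ and from $(2R)^2$), guarantees that $\psi\in C_c^\infty(\bbr)$. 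A translation $w=u-t^2$ then puts the integral exactly into the form handled by Lemma \ref{lem1}, with the test function $\varphi(w)=\psi(t^2+w)$.

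Setting $\a=3-n$ splits into two cases. When $n$ is odd, $3-n=-2m$ with $m=(n-3)/2$, so Lemma \ref{lem1}(i) gives an expression involving $\varphi^{(2m)}(0)=\psi^{(n-3)}(t^2)$; when $n$ is even, $3-n=1-2m$ with $m=(n-2)/2$, so Lemma \ref{lem1}(ii) gives a principal-value/logarithmic expression involving $\psi^{(n-2)}$. In either case, the key identity is
\[
\psi^{(k)}(t^2)=D^{k}\bigl[t^{n-2}(Mf)(\xi,t)\bigr],\qquad D=\tfrac{1}{2t}\tfrac{d}{dt},
\]
which follows by induction from the chain rule, using $\psi(t^2)=t^{n-2}(Mf)(\xi,t)$. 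In the even case one additionally converts back via $u=s^2$, turning $\int\psi^{(n-2)}(u)\log|u-t^2|\,du$ into the integral $\int_0^{2R} s\,D^{n-2}[s^{n-2}(Mf)(\xi,s)]\log|s^2-|x-\xi|^2|\,ds$ displayed in \eqref{anx2}.

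The last task is to apply the back-projection $P$, divide by $|\partial B|=\sigma_{n-1}R^{n-1}$, and reconcile the constants $\sigma_{n-1}c_{m,1}/2|\partial B|$ and $-\sigma_{n-1}c_{m,2}/|\partial B|$ with $\delta_n/2R^{n-1}$ and $-\delta_n/\pi R^{n-1}$, respectively. For $n$ odd this is immediate: $\Gam((n-1)/2)=((n-3)/2)!$ and $[n/2-1]=(n-3)/2$ give $c_{m,1}=\delta_n$. For $n$ even it requires the reflection formula $\Gam(1/2-m)\Gam(1/2+m)=\pi/\cos(\pi m)$, which converts $c_{m,2}=1/(\Gam(1/2-m)(2m-1)!)$ into $(-1)^{(n-2)/2}\Gam((n-1)/2)/(\pi(n-3)!)=\delta_n/\pi$, using $[n/2-1]=(n-2)/2$.

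The interchange of analytic continuation with the outer $\int_{\partial B}$ and with the $(Mf)$-integral is standard (the same reference \cite[Lemma 1.17]{Ru96} used in the proof of Lemma \ref{aau} applies verbatim here), so I expect no real obstacle there. The main conceptual step is recognizing that the doubling-of-variables substitution $u=s^2$ turns the polar form of $N^\a f$ into a one-variable distribution $|t^2-u|^{\a-1}/\Gam(\a/2)$ acting on a genuinely smooth compactly supported function, making Lemma \ref{lem1} directly applicable; the main bookkeeping step, where mistakes are most likely, is the constant matching in the even-dimensional case.
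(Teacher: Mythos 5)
Your proof is correct and follows essentially the same route as the paper's: polar coordinates, the substitution $\t=s^2$ to produce a compactly supported smooth function of one variable (using that $\supp f$ is separated from $\partial B$), a translation to put the integral in the form of Lemma \ref{lem1}, and then back-projection with constant bookkeeping. The only difference is that you make explicit two steps the paper leaves implicit — the induction identity $\psi^{(k)}(t^2)=D^k[t^{n-2}(Mf)(\xi,t)]$ and the reflection-formula computation matching $c_{m,2}$ with $\del_n/\pi$ in the even case — and both of these check out.
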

\begin{proof} Passing to polar coordinates, we have
\bea
&&(N^\a f)(\xi, t)=\sig_{n-1}\intl_0^{2R}\frac{|t^2 -r^2|^{\a -1}}{\Gam (\a/2)}\, (Mf)(\xi, r)\, r^{n-1}\, dr\nonumber\\
&&=\intl_0^{4R^2}\frac{|t^2 -\t|^{\a -1}}{\Gam (\a/2)}\,\vp_\xi (\t)\, d\t,\quad \vp_\xi (\t)=
\frac{\sig_{n-1}}{2}\, \t^{n/2 -1} (Mf)(\xi, \t^{1/2}).\nonumber\eea
Since the support of $f$ is separated from the boundary of $B$,  there is an $\e>0$ such that $\vp_\xi (\t) \equiv 0$ when
$\t \notin (\e, 4R^2-\e)$. Hence, $\vp_\xi (\t)$ can be regarded as a function in $C_c^\infty (\bbr)$ and we can write
$$
(N^\a f)(\xi, t)= \intl_\bbr \frac{|\t|^{\a -1}}{\Gam ( \a/2)} \,  \vp_\xi(\t+t^2)\, d\t.$$
Now, Lemma \ref{lem1} yields
 the following equalities.

\noindent For $n=3,5, \ldots\, $:
$$
 \underset
{\a=3-n} {a.c.}\, (N^\a f)(\xi, t)\!=\!\del_{n,1} \,  \vp_\xi^{(n-3)} (t^2),\quad \del_{n,1}\! =\!\frac{(-1)^{(n-3)/2}\, ((n\!-\!3)/2)!}{(n\!-\!3)!}.
$$
\noindent For $n=4,6,\ldots\,$:
$$
 \underset
{\a=3-n}{a.c.}\,(N^\a f)(\xi, t)\!=\!\del_{n,2} \,\intl_\bbr \!\!\vp_\xi^{(n-2)} (\t)\,\log |\t\!-\!t^2|\, d\t,$$
$$ \del_{n,2}=-\frac{1}{\Gam ((3\!-\!n)/2)\,(n\!-\!3)!}.
$$
Combining these formulas with the backprojection $P$ and noting that operations
$ a.c.$ and $P$ commute,  we obtain

\vskip 0.3 truecm

\noindent For $n=3,5, \ldots\, $:
$$
  \underset
{\a=3-n}{a.c.} (PN^\a
f)(x)=\frac{\del_{n,1}}{|\partial B|} \, \intl_{\partial B } \vp_\xi^{(n-3)} (|x-\xi|^2)\, d\xi.
$$
\noindent For $n=4,6,\ldots\,$:
$$
 \underset
{\a=3-n}{a.c.}\,(PN^\a
f)(x)=\frac{\del_{n,2}}{|\partial B|}   \intl_{\partial B }  d\xi
\intl_0^{4R^2} \vp_\xi^{(n-2)} (\t)\,\log |\t\!-\!|x-\xi|^2|\, d\t.$$
These formulas give the desired result.
\end{proof}

Comparing different forms of  the analytic continuation in Lemmas \ref{aau} and \ref{aau2}, we obtain the following  statement.

\begin{lemma}\label{nxz} Let  $f$ be an infinitely differentiable function
supported in the unit ball $B=\{x \in \rn: |x|<R\}$, $D=\frac {1}{2t}\frac{d}{dt}\,$. Then
\be\label {basic}
\lam_n\, \intl_B \frac {f(y)}{|x-y|^{n-2}}\,dy\qquad   \qquad \qquad \qquad \qquad \qquad \qquad \qquad \qquad \ee
\[
=\left\{ \!
 \begin{array} {l} \displaystyle{\frac{\del_{n}}{2R^{n-1}} \,  \intl_{\partial B} D^{n-3} [t^{n-2} (Mf)(\xi, t)]\Big |_{t=|x-\xi|}\, d\xi}, \\ \mbox{if $\;n\!=\!3,5,\ldots \,$,}\\
 {}\\
 \displaystyle{-\frac{\del_{n}}{\pi\, R^{n-1}}  \intl_{\partial B}  d\xi
\intl_0^{2R} t\,D^{n-2} [t^{n-2} (Mf)(\xi, t)]\,\log |t^2-|x-\xi|^2|\, dt},\\ \mbox{if $\;n\!=\!4,6,\ldots \,$,}\\
 \end{array}
\right.\]
where $\lam_n$ and $\del_{n}$
 are defined by (\ref{822}) and (\ref{amyw}), respectively.
\end{lemma}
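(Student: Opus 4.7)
The statement is essentially a corollary obtained by comparing the two expressions for the same quantity $\underset{\a=3-n}{a.c.}(PN^\a f)(x)$ already computed in Lemmas \ref{aau} and \ref{aau2}. The plan is as follows.

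First, I would observe that Lemma \ref{aau} identifies the analytic continuation as the Riesz-type potential
\[
\lam_n\,\intl_B \frac{f(y)}{|x-y|^{n-2}}\,dy,
\]
with $\lam_n$ given by (\ref{822}). Independently, Lemma \ref{aau2} expresses the same analytic continuation in terms of spherical means of $f$, with two different formulas depending on the parity of $n$: a pointwise expression involving $D^{n-3}[t^{n-2}(Mf)(\xi,t)]$ in the odd case, and a logarithmic integral involving $D^{n-2}[t^{n-2}(Mf)(\xi,t)]$ in the even case. Since both lemmas represent the same object, equating the two expressions immediately gives (\ref{basic}).

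The only point requiring some care is bookkeeping of the constants: one must verify that the constant $\del_{n,1}$ (odd case) or $\del_{n,2}$ (even case) produced in the proof of Lemma \ref{aau2}, together with the factor $1/|\partial B|=1/(\sig_{n-1} R^{n-1})$ from the back-projection and the normalization $\sig_{n-1}/2$ hidden in $\vp_\xi$, collapses to the form $\del_n/(2R^{n-1})$ (resp.\ $-\del_n/(\pi R^{n-1})$) claimed in (\ref{basic}), where $\del_n=(-1)^{[n/2-1]}\Gam((n-1)/2)/(n-3)!$. A short Gamma-function identity using $\Gam((n-1)/2)\Gam((3-n)/2)$ and the reflection formula is the cleanest way to handle the even case. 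The chain rule $\frac{d}{d\tau}=D$ (with $\tau=t^2$) converts the derivatives in $\tau$ on $\vp_\xi(\tau)$ into the operators $D^{n-3}$ or $D^{n-2}$ applied to $t^{n-2}(Mf)(\xi,t)$; this also accounts for an additional factor of $t$ in the even case, since $d\tau=2t\,dt$.

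The main (and essentially only) obstacle is thus constant-tracking; there is no substantive analytic difficulty, because both sides come from the same analytic continuation and the validity of interchanging $a.c.$ with $P$ was already justified in Lemma \ref{aau}. Once the constants are matched, the two-case formula (\ref{basic}) follows.
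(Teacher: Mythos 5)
Your proposal is correct and matches the paper's own proof, which consists precisely of equating the two expressions for $\underset{\a=3-n}{a.c.}\,(PN^\a f)(x)$ given in Lemmas \ref{aau} and \ref{aau2}. The constant bookkeeping you flag ($\del_{n,1}$, $\del_{n,2}$ versus $\del_n$, via the reflection formula) is real but belongs to the proof of Lemma \ref{aau2} itself; since that lemma's statement already carries the constants $\del_n/(2R^{n-1})$ and $-\del_n/(\pi R^{n-1})$, no further work is needed here.
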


The left-hand side of (\ref{basic}) is a constant multiple of  the Riesz potential of order $2$ defined by 
\be\label{rpo}(I^2f)(x)=\frac{\Gam (n/2 -1)}{4\pi^{n/2}}\int_B
\frac{f(y)\, dy}{|x-y|^{n-2}}\ee and satisfying
\be\label{rpoi}
-\Del I^2f =f, \qquad \Del
=\sum\limits_{k=1}^n\frac{\partial^2}{\partial x_k^2}.
\ee
Thus, we arrive at the following result.

\begin{theorem}  \label{mku} Let  $f$ be an infinitely differentiable function
supported in  the unit ball $B=\{x \in \rn: |x|<R\}$, $D=\frac {1}{2t}\frac{d}{dt}\,$.

\noindent {\rm (i)} If $n=3,5, \ldots\, $, then

\be\label {inv1}
f(x)=d_{n,1} \, \Del \intl_{\partial B} D^{n-3} [t^{n-2} (Mf)(\xi, t)]\Big |_{t=|x-\xi|}\, d\xi,
\ee
$$
d_{n,1}=\frac{(-1)^{(n-1)/2}\, \pi^{1-n/2}}{4R\,\Gam(n/2)}.$$

\noindent {\rm (ii)} If $n=4,6,\ldots\,$, then

\be\label {inv2} f(x)\!=\!d_{n,2}\, \Del \intl_{\partial B}  d\xi
\intl_0^{2R} t\,D^{n-2} [t^{n-2} (Mf)(\xi, t)]\,\log |t^2-|x-\xi|^2|\, dt,\ee
$$
d_{n,2}=\frac{(-1)^{n/2-1}\, \pi^{-n/2}}{2R\, (n/2 -1)!}.$$
\end{theorem}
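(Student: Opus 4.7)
The plan is essentially to read off the theorem from Lemma \ref{nxz}: that lemma already expresses the $(n-2)$-potential $\int_B f(y)/|x-y|^{n-2}\,dy$ as an explicit integral transform of the data $(Mf)(\xi,t)$, so only one further step (applying $-\Delta$) is needed to recover $f$.

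The first step is to rewrite the left-hand side of \eqref{basic} as a Riesz potential. By \eqref{rpo},
\[
\intl_B \frac{f(y)}{|x-y|^{n-2}}\,dy=\frac{4\pi^{n/2}}{\Gam(n/2-1)}\,(I^2f)(x),
\]
so Lemma \ref{nxz} can be recast as
\[
\frac{4\pi^{n/2}\,\lam_n}{\Gam(n/2-1)}\,(I^2f)(x)=\Phi_n(x),
\]
where $\Phi_n(x)$ denotes the right-hand side of \eqref{basic} (either the odd or even case). Then I apply $-\Delta$ to both sides; by the Riesz potential identity \eqref{rpoi} the left side collapses to $\frac{4\pi^{n/2}\lam_n}{\Gam(n/2-1)}\,f(x)$, giving
\[
f(x)=-\frac{\Gam(n/2-1)}{4\pi^{n/2}\,\lam_n}\,\Del \Phi_n(x).
\]

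The only remaining task is bookkeeping of constants. Substituting $\lam_n=(2R)^{2-n}\pi^{-1/2}\Gam(n/2)$ from \eqref{822} and $\del_n$ from \eqref{amyw}, and using the duplication identity $\Gam(n/2)\Gam(n/2-1)=\Gam(n-1)\,\pi^{1/2}\,2^{2-n}/\Gam((n-1)/2)$ where convenient, one computes the prefactor of $\Del$ in each case. In the odd case the sign $(-1)^{[n/2-1]}=(-1)^{(n-3)/2}$ is combined with the extra minus sign from $-\Del$ to give $(-1)^{(n-1)/2}$, yielding $d_{n,1}$ as stated; a sanity check at $n=3$ gives $d_{3,1}=-1/(2\pi R)$. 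In the even case the factor $1/\Gam((3-n)/2)$ sitting inside $\del_n/\pi$ (via the reflection formula $\Gam((3-n)/2)\Gam((n-1)/2)=\pi/\sin(\pi(n-1)/2)$, which evaluates cleanly since $n$ is even) combines with the constants to produce $d_{n,2}=(-1)^{n/2-1}\pi^{-n/2}/(2R(n/2-1)!)$.

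There is no real obstacle here; the only thing that might be mildly delicate is that the Laplacian is being pulled \emph{outside} the $\xi$-integral (and, in the even case, the $t$-integral with its logarithmic kernel), but this is justified because $f\in C_c^\infty(B)$ guarantees that $\Phi_n$ is smooth on $B$ away from $\partial B$ and the Riesz potential formula \eqref{rpoi} holds pointwise there. Thus the bulk of the proof is the constant-chasing; the conceptual content is already contained in Lemma \ref{nxz}.
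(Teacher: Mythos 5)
Your proposal is correct and is precisely the paper's own proof: the paper passes from Lemma \ref{nxz} to Theorem \ref{mku} in exactly this way, recognizing the left-hand side of (\ref{basic}) as a constant multiple of the Riesz potential (\ref{rpo}) and applying $-\Del$ via (\ref{rpoi}), the rest being the same constant-chasing you describe. The only blemish is the duplication identity you quote, which should read $\Gam(n/2-1)\,\Gam((n-1)/2)=2^{3-n}\pi^{1/2}(n-3)!$ (equivalently $\Gam((n-1)/2)\,\Gam(n/2)=2^{2-n}\pi^{1/2}\Gam(n-1)$) rather than the hybrid you wrote; with the corrected identity the arithmetic does yield exactly the stated $d_{n,1}$ and $d_{n,2}$ (e.g.\ $d_{3,1}=-1/(2\pi R)$, as you checked).
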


\subsection{The case $n=2$}  Let $\D$ be the open disk in $\bbr^2$ of radius $R$ centered at the origin. 
In this section, for the sake of completeness, we reproduce  (with minor changes) the argument from \cite{FHR}, keeping in mind that  the Riesz potential of order $2$ in the previous section is substituted by the logarithmic potential
\begin{equation}
(I_{\ast}f)(x)=\frac{1}{2\pi}\int\limits_{\D}f(y)\log|x-y|\,dy\,,\label{n2.2}%
\end{equation}
satisfying $\Delta I_{\ast}f=f$. 

The following statement is a substitute for 
 Lemma \ref{lem3}.\smallskip

\begin{lemma}\label {ooim}
\label{Lemma n.1}Let $-1<h<1,\;\sigma\in S^{1}$. Then 
\begin{equation}
g_{\ast}\equiv\int\limits_{S^{1}}\log|\theta\cdot\sigma-h|\,d\theta=-2\pi\log2.
\label{n2.3}%
\end{equation}
\end{lemma}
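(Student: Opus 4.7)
The plan is to reduce the integral to a classical log-sine integral. By the rotation invariance of surface measure on $S^1$ I may assume without loss of generality that $\sigma=(1,0)$. Parametrizing $\theta=(\cos\phi,\sin\phi)$ with $\phi\in[0,2\pi)$, the integral becomes
\[
g_{\ast}=\int_0^{2\pi}\log|\cos\phi-h|\,d\phi.
\]

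Since $-1<h<1$, I write $h=\cos\psi$ for some $\psi\in(0,\pi)$, and apply the sum-to-product identity
\[
\cos\phi-\cos\psi=-2\sin\tfrac{\phi+\psi}{2}\,\sin\tfrac{\phi-\psi}{2},
\]
which gives the decomposition
\[
\log|\cos\phi-\cos\psi|=\log 2+\log\bigl|\sin\tfrac{\phi+\psi}{2}\bigr|+\log\bigl|\sin\tfrac{\phi-\psi}{2}\bigr|.
\]
Integrating over $[0,2\pi)$ splits $g_{\ast}$ into $2\pi\log 2$ plus two integrals of $\log|\sin|$ against $d\phi$.

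For each of the two remaining pieces, I change variables $u=(\phi\pm\psi)/2$; because $\log|\sin u|$ is $\pi$-periodic, the integral over any interval of length $\pi$ equals the standard value
\[
\int_0^{\pi}\log\sin u\,du=-\pi\log 2.
\]
Thus each of the two pieces contributes $2\cdot(-\pi\log 2)=-2\pi\log 2$, and combining,
\[
g_{\ast}=2\pi\log 2-2\pi\log 2-2\pi\log 2=-2\pi\log 2,
\]
independently of $h\in(-1,1)$, as claimed.

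The only delicate point is to justify the use of the log-sine identity in the presence of the integrable singularities of $\log|\sin u|$ at $u\equiv 0\pmod{\pi}$, but since the integrand is locally integrable and the periodicity reduction is exact, this is routine. (If desired, one may alternatively recognize $g_{\ast}$, after the substitution $t=\cos\phi$, as $2\int_{-1}^{1}(1-t^{2})^{-1/2}\log|t-h|\,dt$, i.e. $2\pi$ times the logarithmic potential at $h$ of the Chebyshev equilibrium measure of $[-1,1]$, which is known to equal $-\log 2$ throughout $[-1,1]$; this gives the same constant $-2\pi\log 2$.)
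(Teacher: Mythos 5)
Your proof is correct, and it takes a genuinely different route from the paper's. The paper's entire proof is the rotational-invariance reduction to the weighted integral $g_{\ast}=2\int_{-1}^{1}(1-t^{2})^{-1/2}\log|t-h|\,dt$ --- which appears only in your closing parenthetical --- followed by a citation to \cite[p.~296]{EK} and \cite{FHR} for its value. You instead evaluate everything directly: setting $h=\cos\psi$, factoring $\cos\phi-\cos\psi=-2\sin\tfrac{\phi+\psi}{2}\,\sin\tfrac{\phi-\psi}{2}$, and reducing each resulting piece, via the substitution $u=(\phi\pm\psi)/2$ and the $\pi$-periodicity of $\log|\sin u|$, to the classical value $\int_{0}^{\pi}\log\sin u\,du=-\pi\log 2$. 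The bookkeeping is right: each substitution carries a Jacobian factor of $2$ and an interval of length $\pi$ in $u$, so each of the two pieces contributes $-2\pi\log 2$, and $2\pi\log 2-2\pi\log 2-2\pi\log 2=-2\pi\log 2$; the singularities of $\log|\sin u|$ (at $\phi=\psi$ and $\phi=2\pi-\psi$) are locally integrable, so the manipulations are legitimate, as you note. What your route buys: it is self-contained modulo a truly classical integral, far more standard than the specific weighted integral the paper cites, and it makes transparent why the answer is independent of $h$ --- the $\psi$-dependence is erased by periodicity. Your potential-theoretic aside is also correct and supplies the conceptual reason for that constancy: $g_{\ast}$ is $2\pi$ times the logarithmic potential $\int_{-1}^{1}\log|t-h|\,d\mu(t)$ of the arcsine (equilibrium) measure $\mu$ of $[-1,1]$, which equals $-\log 2$ identically on the interval because $[-1,1]$ has capacity $1/2$. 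What the paper's route buys is only brevity, at the price of an external reference.
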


\begin{proof}
Owing to  rotational invariance,  we can write%
\begin{equation}
g_{\ast}=2\int_{-1}^{1}\frac{\log\left\vert t-h\right\vert }%
{\sqrt{1-t^{2}}}\,dt\,.\label{n2.5}%
\end{equation}
This integral is known; see, e.g.,    \cite[p. 296]{EK}, \cite{FHR}.\footnote{A more general integral was evaluated in \cite [Lemma 6.1]{AR}.} \end{proof}

\begin{lemma}\label{nn7} Let $f$ be a $C^\infty$  function supported
in $\D$. Then
\be\label{kuku}
(I_{\ast}f)(x)\!=\!\frac{1}{2\pi R}\intl_{\partial\D
}\int\limits_{0}^{2R}\!\left(Mf\right)  \left(\xi,t\right)  \log\left\vert
t^{2}\!-\!\left\vert x\!-\!\xi\right\vert ^{2}\right\vert \,t\,dt\,d\xi\!+\!c_f,
\ee
$$
c_f=-\frac{\log R}{2\pi }\intl_{\D} f(y)\, dy.$$
\end{lemma}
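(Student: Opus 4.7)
The plan is to evaluate the double integral on the right-hand side of (\ref{kuku}) directly by Fubini, convert the spherical-mean factor back into an integral over $\D$ via polar coordinates, and then reduce the resulting kernel to the one-dimensional integral treated in Lemma \ref{ooim}.

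First, I would rewrite the inner $t$-integral. Since $(Mf)(\xi,t)=\frac{1}{2\pi}\int_{S^1}f(\xi-t\theta)\,d\theta$, the substitution $y=\xi-t\theta$ (with $dy=t\,dt\,d\theta$) gives
\[
\int_0^{2R}(Mf)(\xi,t)\log\bigl|t^2-|x-\xi|^2\bigr|\,t\,dt=\frac{1}{2\pi}\int_{\D}f(y)\log\bigl||y-\xi|^2-|x-\xi|^2\bigr|\,dy,
\]
where the domain reduces to $\D$ since $\supp f\subset\D\subset B(\xi,2R)$ for $\xi\in\partial\D$. Interchanging the order of integration, it then suffices to compute the kernel
\[
K(x,y):=\int_{\partial\D}\log\bigl||y-\xi|^2-|x-\xi|^2\bigr|\,d\xi.
\]

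Second, I would parametrize $\xi=R\omega$ with $\omega\in S^1$ so $d\xi=R\,d\omega$, and observe
\[
|y-\xi|^2-|x-\xi|^2=|y|^2-|x|^2+2R\omega\cdot(x-y)=2R|x-y|\,(\omega\cdot\sigma-h),
\]
with $\sigma=(x-y)/|x-y|$ and $h=(|x|^2-|y|^2)/(2R|x-y|)$. A quick estimate $|h|\le(|x|+|y|)/(2R)<1$ (since $x,y\in\D$) lets me invoke Lemma \ref{ooim}, yielding
\[
K(x,y)=R\int_{S^1}\bigl[\log(2R|x-y|)+\log|\omega\cdot\sigma-h|\bigr]\,d\omega=2\pi R\bigl[\log(2R|x-y|)-\log 2\bigr]=2\pi R\bigl[\log R+\log|x-y|\bigr].
\]

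Third, assembling the pieces gives
\[
\frac{1}{2\pi R}\int_{\partial\D}\int_0^{2R}(Mf)(\xi,t)\log|t^2-|x-\xi|^2|\,t\,dt\,d\xi=\frac{1}{(2\pi)^2 R}\int_{\D}f(y)K(x,y)\,dy=(I_\ast f)(x)+\frac{\log R}{2\pi}\int_{\D}f(y)\,dy,
\]
which after subtracting the constant term recovers precisely $(I_\ast f)(x)+c_f$ plus a cancellation, establishing (\ref{kuku}).

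The only nontrivial step is the factorization of $|y-\xi|^2-|x-\xi|^2$ that isolates the rotation-invariant integrand $|\omega\cdot\sigma-h|$, in direct analogy with the kernel computation in Lemma \ref{aau}; everything else is routine bookkeeping, and the appearance of the extra constant $c_f$ is exactly what accounts for the $\log 2$ in Lemma \ref{ooim} and the scaling factor $2R|x-y|$ in the factorization.
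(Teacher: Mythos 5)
Your proof is correct and follows essentially the same route as the paper: the paper computes $(PN_{\ast}f)(x)$ in two ways (once via the kernel factorization $|x|^2-|y|^2-2\xi\cdot(x-y)=2R|x-y|(h-\theta\cdot\sigma)$ and Lemma \ref{ooim}, once via polar coordinates in terms of $Mf$) and compares, while you run the identical computation starting from the right-hand side of (\ref{kuku}) and unwinding the spherical mean. The difference is purely organizational, not mathematical.
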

\begin{proof}  Let
$$
\left( N_{\ast}f\right)  \left(\xi,t\right) \! =\! \int\limits_{\D}\! f\left(
y\right)  \log\left\vert t^{2}\! -\! \left\vert y\! -\! \xi\right\vert ^{2}\right\vert
\,dy.$$
Changing the order of
integration and making use of (\ref{n2.3})   with 
$$\sig=\frac{x-y}{|x-y|}, \qquad h=\frac{|x|^2-|y|^2}{2R\, |x-y|},$$
 we obtain
$$ (PN_{\ast} f)(x)=\intl_\D f(y)\,k_{\ast} (x,y)\, dy,
$$ where
 \bea k_{\ast} (x,y)&=&\frac{1}{2\pi R}\intl_{\partial \D } 
 \log\left\vert \left\vert
x-\xi\right\vert ^{2}-\left\vert y-\xi\right\vert ^{2}\right\vert
\,d\xi\nonumber\\
&=&\frac{1}{2\pi R}\intl_{\partial \D } 
\log\left\vert \left\vert
x\right\vert ^{2}-\left\vert y\right\vert ^{2}-2\xi \cdot\left(  x-y\right)
\right\vert \,d\xi\nonumber\\
&=&\frac{1}{2\pi}\int\limits_{S^{1}}\left( \log (2R\left\vert x-y\right\vert) +\log\left\vert
h-\theta\cdot\sig\right\vert \right)  \,d\theta\nonumber\\
&=&\log (2R\left\vert x-y\right\vert) +\frac{g_{\ast}}{2\pi}=\log R+\log |x-y|.
\nonumber\eea
This gives
\be\label{90k}
(PN_{\ast} f)(x)=\intl_\D f(y)\,\log |x-y|\, dy+\log R \,\intl_\D f(y)\, dy.\ee
On the other hand, $ (PN_{\ast} f)(x)$ can be expressed in terms of the spherical means. Indeed, passing to polar coordinates, we have
$$(N_{\ast} f) \left(\xi,t\right)=2\pi \int\limits_{0}^{2R}\left(Mf\right) 
 \left(\xi,r\right)  
\log | r^{2}- t^{2}|\, rdr $$
and therefore,
\be\label{se4}
(PN_{\ast} f)(x)=\frac{1}{R}\intl_{\partial\D
}\int\limits_{0}^{2R}\!\left(Mf\right)  \left(\xi,r\right)  \log\left\vert
r^{2}\!-\!\left\vert x\!-\!\xi\right\vert ^{2}\right\vert \,r\,dr\,d\xi.\ee
Comparing (\ref{90k}) and (\ref{se4}), we arrive at (\ref{kuku}).
\end{proof}

Lemma \ref{nn7}  allows us complete  Theorem \ref{mku} in the following
way.$\smallskip$

\begin{theorem}
\label{Theorem n2.1} Let $f$ be an infinitely differentiable function supported
in the disk $\D=\{x\in\mathbb{R}^{2}:|x|<R\}$. Then%
\begin{equation}\label {fin}
f\left(x\right)  =\Delta \left(  \frac{1}{2\pi R} \intl_{\partial \D
}\int\limits_{0}^{2R}\!\left(Mf\right)  \left(\xi,t\right)  \log\left\vert
t^{2}\!-\!\left\vert x\!-\!\xi\right\vert ^{2}\right\vert \,t\,dt\,d\xi\right).
\end{equation}
\end{theorem}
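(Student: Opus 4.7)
The statement follows almost immediately from Lemma \ref{nn7}, which has already done the substantial work. My plan is simply to apply the Laplacian $\Delta$ (in the variable $x$) to both sides of the identity
\[
(I_{\ast}f)(x)=\frac{1}{2\pi R}\intl_{\partial\D}\intl_{0}^{2R}(Mf)(\xi,t)\log|t^{2}-|x-\xi|^{2}|\,t\,dt\,d\xi+c_{f}
\]
established in that lemma, and to observe that the constant piece disappears.

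First I would invoke the defining property $\Delta I_{\ast}f=f$ of the two-dimensional logarithmic potential, stated just after (\ref{n2.2}). This handles the left-hand side entirely. For the right-hand side, the additive term $c_{f}=-(\log R)/(2\pi)\cdot\intl_{\D}f(y)\,dy$ depends only on $f$, not on $x$, so its Laplacian vanishes identically, and the double-integral expression is simply preserved under the outer application of $\Delta$. Combining these two observations reproduces the formula (\ref{fin}) exactly.

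The only conceptual subtlety I would take care to address is that one should \emph{not} try to move $\Delta$ inside the integral on the right, since the kernel $\log|t^{2}-|x-\xi|^{2}|$ is singular along the surface $t=|x-\xi|$ and is not twice differentiable there in the classical sense. This is not actually an obstacle: Lemma \ref{nn7} already asserts a pointwise equality between two $C^{\infty}$ functions of $x$ on $\D$ (the left side is smooth because $I_{\ast}$ maps $C_{c}^{\infty}$ into $C^{\infty}$ on $\D$, and the right side is then smooth by that same lemma), so the Laplacian may be applied to both sides as functions of $x$ without any distributional gymnastics. The main obstacle therefore lies upstream, in Lemma \ref{nn7} itself, rather than in the present deduction, which amounts to little more than noting $\Delta(\mbox{const})=0$ and invoking the Newtonian potential identity for $I_{\ast}$.
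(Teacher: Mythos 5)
Your proposal is correct and is exactly the paper's (implicit) argument: the paper derives Theorem \ref{Theorem n2.1} directly from Lemma \ref{nn7} by applying $\Delta$ to both sides, using $\Delta I_{\ast}f=f$ and the fact that the constant $c_f$ is annihilated by the Laplacian. Your additional remark about not differentiating under the integral sign is a sensible precaution but introduces nothing beyond what the paper already takes for granted.
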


Formula  (\ref{fin})  can be formally obtained from  (\ref {inv2})  by setting $n=2$. It 
coincides with formula (1.4)   in \cite {FHR}.

\subsection {Modified inversion formulas}  We can replace $f$ by $\Del f$ in   (\ref{basic})
 Since $f$ is smooth and $\supp f$ is separated from the
boundary of $B$, then $I^2 \Del f=-f$. Furthermore, since
$u(x,t)\equiv (M f)(x, t)$ satisfies the  Darboux equation 
$$
\square u \equiv \Del u - u_{tt} -\frac{n-1}{t}\,u_{t}=0
$$
 and $\Del$ commutes with rotations and
translations, then 
$$
(M \Del f)(x,t)= (\Del M f)(x,t)=L [(M f)(x,\cdot)](t), \qquad \forall x \in \rn, \quad t>0,$$
where
$$
 L=\frac{d^2}{dt^2}+\frac{n-1}{t}\,\frac{d}{dt};
 $$
  see, e.g., \cite[p. 17]{He}. This reasoning and its obvious analogue for $n=2$ give the following modifications of  inversion formulas  (\ref {inv1}),  (\ref {inv2}),  and   (\ref{fin})  with the same constant factors:

\noindent {\rm (i)} If $n=3,5, \ldots\, $, then

\be\label {inv1m}
f(x)=d_{n,1} \,  \intl_{\partial B} D^{n-3} [t^{n-2} (LMf)(\xi, t)]\Big |_{t=|x-\xi|}\, d\xi.
\ee

\noindent {\rm (ii)} If $n=2,4,6,\ldots\,$, then

\be\label {inv2m} f(x)\!=\!d_{n,2}\,  \intl_{\partial B}  d\xi
\intl_0^{2R} t\,D^{n-2} [t^{n-2} (LMf)(\xi, t)]\,\log |t^2-|x-\xi|^2|\, dt.\ee

Formula (\ref{inv1m}) agrees  with  \cite[formula (3.8)] {Ru00a}.

\section{Spherical means and EPD equations}

Consider the Cauchy problem for the 
Euler-Poisson-Darboux equation:
\begin{equation}
\square_{\a}u\equiv\Delta u-u_{tt}-\frac{n+2\alpha-1}{t}\,u_{t}%
=0\,,\label{EPD.1}%
\end{equation}%
\begin{equation}
u(x,0)=f(x),\qquad u_{t}(x,0)=0.\label{EPD.2}%
\end{equation}
As in the previous section, we assume  that  $f$ is a smooth function with compact support in the ball $B=\{x\in \bbr^n:\, |x|<R\}$.  If  $\alpha\geq(1-n)/2$,  then (\ref {EPD.1})-(\ref {EPD.2})
has a unique
solution \be\label{ijt} u(x,t)=(M^{\a}f)(x,t) , \qquad x\in\bbr^n, \quad t\in \bbr_+,\ee where  $M^{\a}f$ is defined
as  analytic continuation of the integral  \[
(M^{\a}f)(x,t)  \!=\!\frac{\Gamma\left(  \alpha\!+\!n/2\right)  }{\pi^{n/2}\Gamma\left(
\alpha\right)  }\intl_{|y|<1}\!\!(1\!-\!|y|^{2})^{\alpha-1}%
f(x\!-\!ty)\,dy,  \qquad Re
\,\alpha\!>\!0;\]
see \cite{B2} for details. 
If  $\alpha=0$, then   $M^{0}f\equiv \underset
{\a=0}{a.c.} M^{\a}f$  represents  the spherical mean (\ref{mf}).

\smallskip

Consider the following  problem:

\textit{ Given   the trace $u\left( \xi,t\right)  $
 of  the solution  of  (\ref{EPD.1}) - (\ref{EPD.2})  for all  $\left(\xi,t\right)  \in\partial B\times\mathbb{R}_{+}$,  reconstruct $f(x)$. }

\smallskip

To solve this problem we need some  facts from fractional calculus; see,
e.g., \cite[Sec. 18.1]{SKM} or \cite[Sec. 9.6]{EK}.
 For $Re \,\alpha>0$ and
$\eta\geq-1/2$, the Erd\'{e}lyi-Kober fractional integral of a function
$\varphi$ on $\mathbb{R}_{+}$ is defined by
\begin{equation}
(I_{\eta}^{\alpha}\varphi)(t)=\frac{2t^{-2(\alpha+\eta)}}{\Gamma(\alpha)}%
\int_{0}^{t}(t^{2}-r^{2})^{\alpha-1}r^{2\eta+1}\varphi(r)\,dr,\qquad
t>0.\label{eci}%
\end{equation}
In our case it suffices to assume that $\varphi$ is infinitely smooth and
supported away from the origin. Then $I_{\eta}^{\alpha}\varphi$ extends as an
entire function of $\alpha$ and $\eta$, so that  $I_{\eta}^{0}\varphi=\varphi$, 
$\; \left( I_{\eta}^{\alpha}\right)^{-1}\!\varphi=I_{\eta+\alpha
}^{-\alpha}\varphi$, 
\[ (I_{\eta}^{-m}\varphi)(t)=t^{-2(\eta-m)}D^{m}\,t^{2\eta}\varphi(t),\qquad 
D=\frac{1}{2t}\,\frac{d}%
{dt}.\]

Assuming $x=\xi\in \partial B$ and passing to polar coordinates,  we obtain 
\begin{equation}\label{EDP.5}
u_\xi(t)=(M^{\a}f)(\xi,t)  =\frac{\Gamma\left(
\alpha+n/2\right)}{\Gamma\left(n/2\right)  }\left(  I_{\eta}^{\alpha}\varphi_\xi\right)  \left(  t\right),
\end{equation}
where  $u_{\xi}\left( t\right)  =u(\xi,t) $,  $\varphi_{\xi}\left(  t\right)  =(Mf)(\xi,t) $, $\eta=n/2-1$.
This gives
\begin{equation}\label{EDP.6}
\varphi_{\xi}=\frac{\Gamma(n/2)}{\Gamma(\alpha+n/2)}\left(  I_{\eta}^{\alpha}\right) ^{-1}u_{\xi}=\frac{\Gamma(n/2)}{\Gamma(\alpha+n/2)}I_{\eta+\alpha
}^{-\alpha}u_{\xi}.
\end{equation}
Now, since $\varphi_{\xi}(t)=(Mf)(\xi,t) $ is known, we can use
 Theorem \ref{mku}  to reconstruct  $f$ by the following formulas.

\begin{theorem}\label{Th4.1} 
 Let  $f$ be an infinitely differentiable function
supported in  the unit ball $B=\{x \in \rn: |x|<R\}$, $D=\frac {1}{2t}\frac{d}{dt}\,$.

\noindent\textrm {\rm (i)}  If $n=3,5,\ldots\,$, then%
\[
f(x)=\widetilde{d}_{n,1}\,\Delta\int\limits_{\partial B}D^{n-3}[t^{n-2}%
(I_{\eta+\alpha}^{-\alpha}u_{\xi})\left( t\right)  ]\Big |_{t=|x-\xi|}%
\,d\xi,
\]
\[
\widetilde{d}_{n,1}=\frac{(-1)^{(n-1)/2}\,\pi^{1-n/2}}{4R\,\Gamma(\alpha
+n/2)}.\]

\noindent\textrm {\rm (ii)}  If $n=2,4,6,\ldots\,$, then%
\[f(x)=\widetilde{d}_{n,2}\,\Delta\int\limits_{\partial B}d\xi\int
\limits_{0}^{2R}t\,D^{n-2}[t^{n-2}(I_{\eta+\alpha}^{-\alpha}u_{\xi})\left(
t\right)  ]\,\log|t^{2}-|x-\xi|^{2}|\,dt,\]
\[
\widetilde{d}_{n,2}=\frac{(-1)^{n/2-1}\,\pi^{-n/2}}{2R\,\Gamma(\alpha
+n/2)}.\]
\end{theorem}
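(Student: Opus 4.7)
The plan is to reduce the EPD inverse problem to the spherical-mean inversion problem already solved in Theorem \ref{mku}, using the Erd\'elyi--Kober calculus as a bridge. First I would recall the representation \eqref{EDP.5} of the solution: restricting $x=\xi\in\partial B$, passing to polar coordinates in the integral defining $M^\a f$, and recognizing the resulting radial integral as an Erd\'elyi--Kober fractional integral of order $\a$ with parameter $\eta=n/2-1$ acting on the radial variable $t$. This gives
\[
u_\xi(t)=\frac{\Gamma(\a+n/2)}{\Gamma(n/2)}\,(I_\eta^\a\varphi_\xi)(t),\qquad \varphi_\xi(t)=(Mf)(\xi,t),
\]
a relation that extends by analytic continuation in $\a$ to all admissible values including $\a=0$.

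Next I would invert this relation. Because $f\in C_c^\infty(B)$ and $\supp f$ is separated from $\partial B$, the function $\varphi_\xi(t)$ is smooth and supported in a compact subset of $(0,2R)$ away from $t=0$, so the Erd\'elyi--Kober operator $I_\eta^\a$ acts on a class on which the identities $I_\eta^0=\mathrm{id}$ and $(I_\eta^\a)^{-1}=I_{\eta+\a}^{-\a}$ hold as stated. Applying $I_{\eta+\a}^{-\a}$ to both sides of \eqref{EDP.5} yields the explicit formula \eqref{EDP.6}, expressing the (generally unknown) spherical mean $\varphi_\xi$ as a differential expression $I_{\eta+\a}^{-\a}u_\xi$ in the given boundary data $u_\xi$.

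At this point $(Mf)(\xi,t)$ is known in terms of the data $u(\xi,t)$, and the remaining task is simply to substitute this into the inversion formulas of Theorem \ref{mku}. For $n$ odd one substitutes into \eqref{inv1} and for $n$ even (including $n=2$, via Theorem \ref{Theorem n2.1}) into \eqref{inv2}. The constants $d_{n,1}$ and $d_{n,2}$ get multiplied by the factor $\Gamma(n/2)/\Gamma(\a+n/2)$ coming from \eqref{EDP.6}, producing the constants $\widetilde d_{n,1}$ and $\widetilde d_{n,2}$ exactly as stated, since
\[
\widetilde d_{n,j}=\frac{\Gamma(n/2)}{\Gamma(\a+n/2)}\cdot d_{n,j},\qquad j=1,2,
\]
using $d_{n,1}=(-1)^{(n-1)/2}\pi^{1-n/2}/(4R\,\Gamma(n/2))$ and the analogous simplification for $d_{n,2}$.

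The only delicate point, and the step I expect to require the most care, is the legitimacy of the manipulations at $\a=0$ (the TAT case $\a=(1-n)/2$ is handled the same way). One must verify that the identity \eqref{EDP.5} obtained for $\mathrm{Re}\,\a>0$ persists after analytic continuation in $\a$, that $I_{\eta+\a}^{-\a}$ commutes with restriction to $\partial B$ and with the operators $D^{n-3}$, $D^{n-2}$, $\Delta$, and the logarithmic convolution appearing in \eqref{inv1}--\eqref{inv2}, and that the resulting expression for $f(x)$ depends only on the trace $u(\xi,t)$ on $\partial B\times\mathbb{R}_+$. All of these follow from the smoothness and compact-support hypotheses on $f$ together with the standard entire-function extension of $I_\eta^\a\varphi$ in $\a$, so once these verifications are made, the theorem follows immediately by substitution.
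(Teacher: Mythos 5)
Your proposal is correct and follows essentially the same route as the paper: restrict to $x=\xi\in\partial B$, recognize the radial integral as the Erd\'elyi--Kober integral $I_\eta^\alpha$ with $\eta=n/2-1$ (the paper's (\ref{EDP.5})), invert via $(I_\eta^\alpha)^{-1}=I_{\eta+\alpha}^{-\alpha}$ on smooth functions supported away from the origin (the paper's (\ref{EDP.6})), and substitute the recovered spherical means into Theorem \ref{mku} (and Theorem \ref{Theorem n2.1} for $n=2$), picking up exactly the factor $\Gamma(n/2)/\Gamma(\alpha+n/2)$ in the constants. The paper's own argument is precisely this substitution, with the analytic-continuation justification handled by the same observation you make about $\varphi_\xi$ being smooth and supported in a compact subset of $(0,2R)$.
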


The case $\a=(1-n)/2$ in this theorem gives explicit solution to the TAT problem (see Introduction) 
 with constant speed $c(x)\equiv 1$. Moreover,  after  $f$ has been  found, we can reconstruct $u(x,t)$ in the whole space by setting $u(x,t)=(M^{\a}f)(x,t)$. The latter gives an explicit solution to  the Cauchy problem for the generalized 
EPD equation (\ref{EPD.1}) with initial data on the cylinder $\partial B\times\mathbb{R}_{+}$.

\section{Spherical Means on $S^n$}

  The suggested method of analytic continuation enables us to study the spherical mean Radon transform $Mf$ on  arbitrary constant curvature  space $X$. In this setting,  $\supp f \subset B$,   
  where $B$ is a geodesic ball  centered at the origin, and the spherical means of $f$ are evaluated over  geodesic spheres, the centers of which are located on the boundary of $B$.  In this section we consider the case, when $X=S^n$ is the $n$-dimensional sphere  in $\bbr^{n+1}$. 
  
  Given  $x\in S^n$ and $t \in (-1,1)$, let 
\be \label {77b}
(Mf)(x, t)=\frac{(1-t^2)^{(1-n)/2}}{\sig_{n-1}}\intl_{x\cdot y=t} f (y)\, d\sig (y) \ee
be the mean value of a function $f\in C^\infty (S^n)$  over the planar section  $\{y\in S^n: x\cdot y=t\}$  (Fig.2).

Our aim is to reconstruct $f$ under the following assumptions:

\noindent (a) The support of $f$ lies on the spherical cap
\be B_\theta=\{x\in S^n:  x\cdot e_{n+1} >\cos \,\theta\},\ee
(the geodesic ball of
 radius $\theta$), where $e_{n+1}=(0, \ldots, 0,1)$ is the north pole of $S^n$ and $\theta \in (0,\pi/2]$ is fixed.

\noindent (b) The mean values (\ref{77b}) are known for all  $x=\xi\in  \partial B_\theta $ and all $t\in  (-1,1)$, where
$\partial B_\theta$ is the boundary of  $B_\theta$.

\begin{figure}
\centerline{
\scalebox{0.6}{\includegraphics{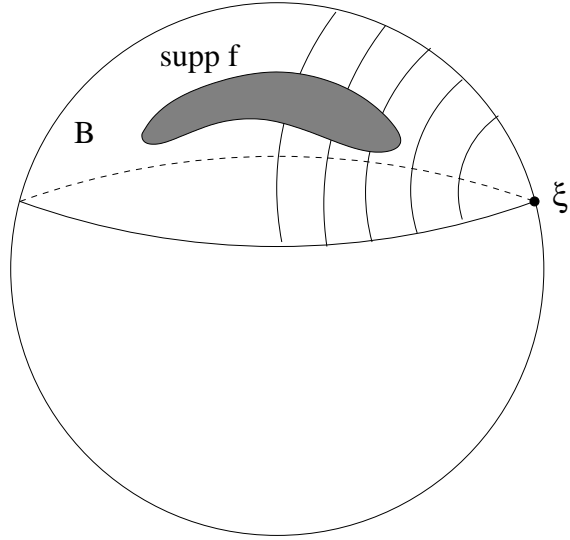}}}
\caption{The spherical case.}
\label{fig2}
\end{figure}

This problem can be solved using the method of the previous section.  Let us fix our notation. In the following $S^n_+=\{x
  \in S^n  :  x_{n+1}\ge 0\}$ is the upper hemisphere,  $B_0$ denotes the unit ball in the  hyperplane $ x_{n+1}=0$;  $S^{n-1}$ stands for the boundary of $B_0$, which is also the boundary of  $S^n_+$.  For $x\in S^n_+$ we write
$$
x=(x',  \sqrt{1-|x'|^2}),  \qquad x'=(x_1, \ldots, x_n,0) \in B_0,$$
so that
\bea 
\intl_{S^n_+} f(x) \, dx&=&\intl_{B_0} f(x',  \sqrt{1-|x'|^2}) \sqrt{1+
\left (\frac{\partial x_{n+1}}{\partial x_1}\right )^2+\ldots +\left (\frac{\partial x_{n+1}}{\partial x_n}\right )^2}\, dx'\nonumber\\
&=&\intl_{B_0} f(x',  \sqrt{1-|x'|^2}) \,\frac{dx'}{ \sqrt{1-|x'|^2}}. \nonumber\eea

We  introduce the  backprojection operator $P$,  that sends  functions on $\partial B_\theta \times  (-1,1)$ to  functions on $B_\theta$ by the formula
 \be\label{pfns}
(PF)(x)= \frac{1}{|\partial  B_\theta|}\,\intl_{\partial B_\theta} F(\xi, \xi\cdot x)\,
d\xi, \qquad x \in B_\theta.\ee
Here   $d\xi$  and  $|\partial  B_\theta|$ denote the surface element and the area of  $\partial B_\theta $, respectively.   We denote by $\tilde B_\theta$ the orthogonal projection of $B_\theta$ onto 
the  hyperplane $ x_{n+1}=0$.

\subsection{The case $n>2$} \label {98m} Assuming $(\xi, t)\in\partial B_\theta \times  (-1,1)$ and $Re \, \a >0$, consider the following analytic family of operators
\be
(N^\a f)(\xi, t)=\intl_{B_\theta} \frac{|\xi\cdot y -t|^{\a -1}}{\Gam (\a/2)}\, f(y)\, dy.\ee
\begin{lemma}\label{aaus}  Let $f\in C^\infty (S^n)$, $\supp f \subset B_\theta$. Then
\be\label {anals} \underset
{\a=3-n}{a.c.} (PN^\a
f)(x)=\frac{\Gam (n/2)\, (\sin \theta)^{2-n}}{\pi^{1/2}}\intl_{\tilde B_\theta}  \frac {\tilde f(y')\, dy'}{|x'-y'|^{n-2}},\ee
\be
\tilde f(y')=(1-|y'|^2)^{-1/2}\,f(y', (1-|y'|^2)^{1/2}).
\ee
\end{lemma}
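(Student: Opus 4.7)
The plan is to mirror the Euclidean derivation of Lemma \ref{aau}. For $Re\,\a>0$, Fubini's theorem lets me write
\[
(PN^\a f)(x)=\intl_{B_\theta} f(y)\,k_\a(x,y)\,dy,\quad k_\a(x,y)=\frac{1}{|\partial B_\theta|}\intl_{\partial B_\theta} \frac{|\xi\cdot (y-x)|^{\a -1}}{\Gam (\a/2)}\, d\xi,
\]
and then reduce $k_\a$ to the one-variable kernel $g_\a(h)$ from Lemma \ref{lem3}. I would parametrize $\partial B_\theta$ by $\xi=(\sin\theta\,\omega,\cos\theta)$ with $\omega\in S^{n-1}$ (so that $|\partial B_\theta|=\sig_{n-1}(\sin\theta)^{n-1}$ and $d\xi=(\sin\theta)^{n-1}d\omega$) to rewrite
\[
\xi\cdot (y-x)=\sin\theta\,\omega\cdot (y'-x')+\cos\theta\,(y_{n+1}-x_{n+1})=a\,(\omega\cdot\sigma-h),
\]
with $a=\sin\theta\,|y'-x'|$, $\sigma=(y'-x')/|y'-x'|$, and $h=-\cos\theta\,(y_{n+1}-x_{n+1})/(\sin\theta\,|y'-x'|)$. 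The standard slice formula $\intl_{S^{n-1}} F(\omega\cdot\sigma)\,d\omega=\sig_{n-2}\intl_{-1}^1 F(t)(1-t^2)^{(n-3)/2}\,dt$ then gives $k_\a(x,y)=(\sig_{n-2}/\sig_{n-1})\,a^{\a-1}\,g_\a(h)$.

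The only genuinely new ingredient compared with the Euclidean case is the verification that $|h|<1$ uniformly for $x,y\in\supp f$, which is precisely what is required to apply Lemma \ref{lem3}(ii); this is where I expect the (modest) main technical hurdle to lie. I would derive it from the identity $(y-x)\cdot(y+x)=0$, valid on $S^n$, which forces
\[
|y_{n+1}-x_{n+1}|\,(y_{n+1}+x_{n+1})=\bigl||y'|^2-|x'|^2\bigr|\le (|y'|+|x'|)\,|y'-x'|.
\]
Combining this with $|x'|,|y'|\le\sin\theta$ and $x_{n+1},y_{n+1}\ge\cos\theta$ yields $\cos\theta\,|y_{n+1}-x_{n+1}|\le\sin\theta\,|y'-x'|$, hence $|h|\le 1$, with strict inequality uniformly in $(x,y)$ once $\supp f$ is separated from $\partial B_\theta$.

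With $|h|<1$ in hand, Lemma \ref{lem3}(ii) gives $\underset{\a=3-n}{a.c.}\,g_\a(h)=\Gam((n-1)/2)$, and the uniform smoothness in Lemma \ref{lem3}(i) justifies moving the analytic continuation under the $y$-integral (cf.\ \cite[Lemma 1.17]{Ru96}). To finish, I would convert the integral over $B_\theta\subset S^n$ to one over $\tilde B_\theta\subset\bbr^n$ via the parametrization $y=(y',\sqrt{1-|y'|^2})$; the resulting surface-element factor $(1-|y'|^2)^{-1/2}$ is precisely what turns $f(y)$ into $\tilde f(y')$. A direct computation $\sig_{n-2}\Gam((n-1)/2)/\sig_{n-1}=\Gam(n/2)/\pi^{1/2}$ then assembles the final constant and produces the displayed formula.
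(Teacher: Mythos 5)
Your proposal is correct and follows essentially the same route as the paper: Fubini, the parametrization $\xi=e_{n+1}\cos\theta+\om\sin\theta$ reducing the kernel to $(\sig_{n-2}/\sig_{n-1})\,a^{\a-1}g_\a(h)$, the bound $|h|<1$, Lemma \ref{lem3} plus the interchange of $a.c.$ with integration, and the projection onto $\tilde B_\theta$ producing the factor $(1-|y'|^2)^{-1/2}$ and the constant $\sig_{n-2}\Gam((n-1)/2)/\sig_{n-1}=\Gam(n/2)/\pi^{1/2}$. The one place where you genuinely diverge is the verification that $|h|<1$: the paper writes $x=e_{n+1}\cos\gam+u\sin\gam$, $y=e_{n+1}\cos\del+v\sin\del$, discards the cross term $u\cdot v$ to get $|\tilde h|\le(\cos\gam-\cos\del)/(\sin\del-\sin\gam)=\tan\frac{\gam+\del}{2}<\tan\theta$, and then multiplies by $\cot\theta$; you instead exploit the orthogonality $(y-x)\cdot(y+x)=0$ on $S^n$ together with $\bigl||y'|-|x'|\bigr|\le|y'-x'|$. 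Both arguments are valid, and yours is arguably cleaner in that it runs exactly parallel to the Euclidean estimate for $h=(|x|^2-|y|^2)/(2R|x-y|)$ and avoids spherical coordinates and trigonometric identities; to make the uniform strict bound fully explicit you would add that $\supp f\subset B_{\theta'}$ with $\theta'<\theta$ gives $|h|\le\frac{|x'|+|y'|}{x_{n+1}+y_{n+1}}\,\cot\theta\le\tan\frac{\theta+\theta'}{2}\,\cot\theta<1$, which is the quantitative form your chain of inequalities already yields.
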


\begin{proof} For $Re \, \a >0$, changing the order of
integration, we obtain\footnote{For the sake of convenience, we use some notations which mimic  analogous expressions in the Euclidean case. }
$$
(PN^\a f)(x)=\intl_{B_\theta} f(y)\, k_\a (x,y), \, dy, \quad k_\a (x,y)=\frac{1}{|\partial  B_\theta|}\,\intl_{\partial B_\theta} 
\frac{|\xi\cdot (x- y)|^{\a -1}}{\Gam (\a/2)}
d\xi.$$
Since $\xi$ has the form $\xi=e_{n+1}\, \cos\, \theta +\om \sin\, \theta, \; \om \in S^{n-1}$, then
\bea
|\xi\cdot (x- y)|&=&|(x_{n+1}-y_{n+1})\, \cos\,\theta+(x'- y')\cdot \om\, \, \sin\theta |\nonumber\\
&=& \left | h-\om \cdot \sig \right |\, |x'- y'|\, \sin\theta, \label{laac}\eea
\be h=\frac{x_{n+1}-y_{n+1}}{|x'- y'|}\, \cot \theta, \qquad \sig=\frac{x'- y'}{|x'- y'|}.\ee
Hence,
\be\label{okm}
k_\a (x,y)=\frac {(|x'- y'|\, \sin\theta)^{\a-1}}{\sig_{n-1}}\,\intl_{S^{n-1}} \frac{|h-\om \cdot \sig |^{\a -1}}{\Gam (\a/2)}\, d\om.\ee
The integral  in (\ref{okm}) is independent of $\xi$ and can be written as
$$
\frac{\sig_{n-2}}{\Gam (\a/2)} \intl_{-1}^1  |t \!-\!h|^{\a -1}\,(1-t^2)^{(n-3)/2}\, dt=\sig_{n-2}\,g_\a (h);$$
cf. (\ref{den}).  This gives
\be\label{jui}
k_\a (x,y)=\frac {\sig_{n-2}\,(|x'- y'|\, \sin\theta)^{\a-1}}{\sig_{n-1}}\, g_\a (h) .\ee

Let us show that $|h|<1$. We write
$$
x=e_{n+1}\, \cos\, \gam +u \sin\, \gam, \qquad y=e_{n+1}\, \cos\, \del +v\sin\,\del, $$
$$
\gam,  \del \in (0,\theta); \qquad  u,v\in S^{n-1}; \qquad \tilde h=\frac{x_{n+1}-y_{n+1}}{|x'- y'|}.$$
Then
\bea
 |\tilde h|^2&=&\frac{(\cos\, \gam -\cos\, \del)^2}{|u \sin \gam - v\sin\del |^2}\nonumber\\
 &=&\frac{(\cos\, \gam -\cos\, \del)^2}{\sin^2 \gam - 
 2(u\cdot v)\sin \gam \,\sin\del+\sin^2 \del}\nonumber\\
 &\le&\frac{(\cos\, \gam -\cos\, \del)^2}{\sin^2 \gam - 
 2\sin \gam \,\sin\del+\sin^2 \del}=
 \frac{(\cos\, \gam -\cos\, \del)^2}{(\sin \gam - \sin \del )^2}. 
 \nonumber\eea
 Without loss of generality, suppose that $\gam\le \del$. Then
 $$
  |\tilde h|\le \frac{\cos\, \gam -\cos\, \del}{\sin \del -\sin \gam}=\tan  \frac{\gam +\del}{2}<\tan \theta,
$$
  and therefore, $|h|= |\tilde h| \,  \cot \theta<1$. 

Since $|h|<1$,  Lemma \ref{lem3}  yields
$$
 \underset
{\a=3-n}{a.c.} (PN^\a
f)(x)=c_n \intl_{B_\theta}\frac {f(y)\, dy}{|x'-y'|^{n-2}}=
c_n \intl_{\tilde B_\theta}  \frac {\tilde f(y')\, dy'}{|x'-y'|^{n-2}},$$
$$
\tilde f(y')\!=\!(1\!-\!|y'|^2)^{-1/2}\,f(y', (1\!-\!|y'|^2)^{1/2}), \quad c_n\!=\!
\frac{\Gam (n/2)\, (\sin \theta)^{2-n}}{\pi^{1/2}}.
$$
\end{proof}

As before, we need one more representation of  $\underset
{\a=3-n}{a.c.} (PN^\a
f)(x)$, now in terms of the spherical means $(Mf)(\xi, t)$.

\begin{lemma}\label{aauss}    Let $f\in C^\infty (S^n)$, $\supp f \subset B_\theta$.  Then
\be\label {anxs} \underset
{\a=3-n}{a.c.} (PN^\a
f)(x)=\frac{\del_{n}}{(\sin\theta)^{n-1}}\,  \intl_{\partial B_\theta} (d/dt)^{n-3} [(Mf)(\xi, t)\, (1-t^2)^{n/2 -1}]\Big |_{t=\xi\cdot x}\, d\xi,\nonumber\ee
if  $n=3,5, \ldots\, $, and 
\bea\label {anx2s} &&\underset
{\a=3-n}{a.c.} (PN^\a
f)(x)=-\frac{\del_{n}}{\pi\,(\sin\theta)^{n-1}} \,  \intl_{\partial B_\theta}  
d\xi\nonumber\\
&&\times\intl_{\cos\, 2\theta}^1  (d/dt)^{n-2} [(Mf)(\xi, t)\, (1-t^2)^{n/2 -1}]\,\log |t-\xi\cdot x|\,dt,\nonumber\eea
if $n=4,6,\ldots\,$, where $\del_{n}$ is defined by (\ref{amyw}).

\end{lemma}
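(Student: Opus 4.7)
The proof follows the same strategy as Lemma \ref{aau2} in the Euclidean setting. The plan is: first, express $(N^\a f)(\xi,t)$ as a one-dimensional integral against the spherical means $(Mf)(\xi,\cdot)$; second, use Lemma \ref{lem1} to continue this integral analytically at $\a=3-n$; and finally restore the backprojection $P$, which commutes with analytic continuation.

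The key reduction is obtained by slicing $S^n$ along the level sets of the linear function $y\mapsto\xi\cdot y$. Since $|\nabla_{S^n}(\xi\cdot y)|=\sqrt{1-s^2}$ on $\{y\in S^n:\xi\cdot y=s\}$, the coarea formula combined with the definition (\ref{77b}) of $Mf$ yields
\begin{equation*}
(N^\a f)(\xi,t)=\sig_{n-1}\intl_{-1}^{1}\frac{|s-t|^{\a-1}}{\Gam(\a/2)}\,(1-s^2)^{n/2-1}\,(Mf)(\xi,s)\,ds.
\end{equation*}
Set $\psi_\xi(s)=\sig_{n-1}(1-s^2)^{n/2-1}(Mf)(\xi,s)$. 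Since $\supp f$ is compactly contained in the open cap $B_\theta$ and $\xi\in\partial B_\theta$, the values $\xi\cdot y$ for $y\in\supp f$ remain in a compact subset of $(\cos 2\theta,1)$, so $\psi_\xi\in C_c^\infty(\bbr)$. The shift $\tau=s-t$ then converts the above display into
\begin{equation*}
(N^\a f)(\xi,t)=\intl_\bbr \frac{|\tau|^{\a-1}}{\Gam(\a/2)}\,\psi_\xi(\tau+t)\,d\tau,
\end{equation*}
to which Lemma \ref{lem1} applies directly.

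For $n$ odd, $\a=3-n=-2m$ with $m=(n-3)/2$, and Lemma \ref{lem1}(i) evaluates the analytic continuation to $c_{m,1}\,\psi_\xi^{(n-3)}(t)$. For $n$ even, $\a=3-n=1-2m$ with $m=n/2-1$, and Lemma \ref{lem1}(ii) gives $-c_{m,2}\intl_\bbr\psi_\xi^{(n-2)}(s)\log|s-t|\,ds$, whose effective range of integration is automatically $(\cos 2\theta,1)$ because of the support of $\psi_\xi$. Applying $P$ and substituting $t=\xi\cdot x$ produces the formulas asserted in the lemma, up to constants.

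The main obstacle I anticipate is the bookkeeping of constants, especially in the even case. Using $|\partial B_\theta|=\sig_{n-1}(\sin\theta)^{n-1}$ together with the reflection identity $\Gam((3-n)/2)\,\Gam((n-1)/2)=(-1)^{n/2-1}\pi$, one checks that $-c_{m,2}\,\sig_{n-1}/|\partial B_\theta|=-\del_{n}/(\pi(\sin\theta)^{n-1})$ with $\del_{n}$ as in (\ref{amyw}). In the odd case the corresponding identity $c_{m,1}\,\sig_{n-1}/|\partial B_\theta|=\del_{n}/(\sin\theta)^{n-1}$ is immediate from $\Gam((n-1)/2)=((n-3)/2)!$. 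Once these constants are matched, the two assertions of the lemma follow at once.
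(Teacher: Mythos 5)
Your proof is correct and takes essentially the same route as the paper's: the paper likewise rewrites $(N^\a f)(\xi,t)$ as a one-dimensional convolution against $\vp_\xi(\t)=\sig_{n-1}(Mf)(\xi,\t)(1-\t^2)^{n/2-1}_+$ via the slicing identity (\ref{8ghv}) (which you re-derive by coarea), notes that the compact support of $f$ inside $B_\theta$ makes $\vp_\xi\in C_c^\infty(\bbr)$, applies Lemma \ref{lem1} at $\a=3-n$, and then composes with the backprojection $P$. Your constant bookkeeping, including $|\partial B_\theta|=\sig_{n-1}(\sin\theta)^{n-1}$ and the reflection identity $\Gam((3-n)/2)\,\Gam((n-1)/2)=(-1)^{n/2-1}\pi$, checks out against $\del_n$ in (\ref{amyw}).
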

\begin{proof} For $Re \, \a >0$, by making use of the formula
\be\label{8ghv}
\intl_{S^n} f(y)\, a (\xi\cdot y) \, dy=\sig_{n-1}\intl_{-1}^1 a(\t) (Mf)(\xi, \t)\,(1-\t^2)^{n/2 -1}\, d\t,\ee
we have
\bea
&&(N^\a f)(\xi, t)=\frac{\sig_{n-1}}{\Gam (\a/2)}\intl_{-1}^1  (Mf)(\xi, \t)\,|\t -t|^{\a -1} (1-\t^2)^{n/2 -1}\, d\t\nonumber\\
&&= \intl_\bbr \frac{|\t|^{\a -1}}{\Gam ( \a/2)} \,  \vp_\xi (\t+t)\, d\t, \quad  \vp_\xi (\t)=\sig_{n-1}\,(Mf)(\xi, \t) \,(1-\t^2)^{n/2 -1}_+.\nonumber\eea
Since $f$ is smooth and its support is separated from the boundary 
 $\partial B_\theta$,  then $ (Mf)(\xi, \t)$ is smooth in the $\t$-variable uniformly in $\xi$  and vanishes identically in the respective neighborhoods of $\t=\pm 1$. Thus, we can invoke 
 Lemma \ref{lem1}  which yields the following equalities.

\noindent For $n=3,5, \ldots\, $; $\cos\, 2\theta<t<1$:
$$
 \underset
{\a=3-n} {a.c.}\,(N^\a f)(\xi, t)\!=\!\del_{n} \,  \vp_\xi^{(n-3)} (t),$$

\noindent For $n=4,6,\ldots\,$:
$$
 \underset
{\a=3-n}{a.c.}\,(N^\a f)(\xi, t)=-\frac{\del_{n}} {\pi}\,\intl_{\cos\, 2\theta}^1\!\!\vp_\xi^{(n-2)} (\t)\,\log |\t\!-\!t|\, d\t,$$
$\del_n$ being defined by (\ref{amyw}). 
The above formulas mimic those in the proof of Lemma \ref{aau2}  and the result follows.
\end{proof}

 Lemmas   \ref{aaus}  and   \ref{aauss} imply the following inversion result for the spherical means on $S^n$. In the statement below, $\Del_{x'}=\partial_1^2 +\ldots +\partial_n^2$ is the usual Laplace operator in the $x'$-variable.

\begin{theorem} \label{774}  Let $f\in C^\infty (S^n)$, $\supp f \subset  B_\theta$.  Then
\be\label{8njv} f(x)\!=\!  \frac{d_n\, x_{n+1}}{\sin\theta}\, \Del_{x'} f_0 (x', \sqrt{1\!-\!|x'|^2}),\quad d_{n}\!=\!\frac{(-1)^{[n/2-1]}}{2^{n-1} \pi^{n/2 -1}\Gam(n/2)},\ee
where  $f_0 (x)\equiv f_0 (x', \sqrt{1-|x'|^2})$ has the following form:
$$
f_0 (x)=-\intl_{\partial B_\theta} (d/dt)^{n-3} [(Mf)(\xi, t)\, (1-t^2)^{n/2 -1}]\Big |_{t=\xi\cdot x}\, d\xi
$$
if $n=3,5, \ldots\, $, and
$$
f_0 (x)=\frac{1}{\pi} \intl_{\partial B_\theta} d\xi
\intl_{\cos\, 2\theta}^1  (d/dt)^{n-2} [(Mf)(\xi, t)\, (1-t^2)^{n/2 -1}]\,\log |t-\xi\cdot x|\,dt$$
if $n=4,6,\ldots\,$.
\end{theorem}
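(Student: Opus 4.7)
The plan is to equate the two expressions for the analytic continuation $\underset{\a=3-n}{a.c.}(PN^\a f)(x)$ obtained in Lemmas \ref{aaus} and \ref{aauss}, and then strip off the Riesz kernel on one side by the planar Laplacian $\Del_{x'}$.

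First I observe that the right-hand side of (\ref{anals}) is a constant multiple of the classical Riesz potential of order $2$ in $\bbr^n$, evaluated in the variable $x'$ at the function $\tilde f$, which is supported in the open $n$-dimensional ball $\tilde B_\theta$ and satisfies $\tilde f(x') = x_{n+1}^{-1} f(x)$. Since $\supp f$ is separated from $\partial B_\theta$, the function $\tilde f$ is smooth and compactly supported in $\tilde B_\theta$, so the inversion identity (\ref{rpoi}) for $I^2$ applies after extending $\tilde f$ by zero to all of $\bbr^n$. Applying $-\Del_{x'}$ to both sides of (\ref{anals}) and using $\Gam(n/2)/\Gam(n/2-1) = (n-2)/2$, I obtain
$$-\Del_{x'}\bigl[\underset{\a=3-n}{a.c.}(PN^\a f)(x)\bigr] = 2(n-2)\pi^{(n-1)/2}(\sin\theta)^{2-n}\, x_{n+1}^{-1}\, f(x).$$

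Next, in both parities treated in Lemma \ref{aauss} the same analytic continuation can be written uniformly as $-\del_n(\sin\theta)^{1-n} f_0(x)$, with $f_0$ precisely as displayed in Theorem \ref{774}. Applying $-\Del_{x'}$ here and matching with the previous identity yields
$$f(x) = \frac{\del_n\, x_{n+1}}{2(n-2)\pi^{(n-1)/2}\sin\theta}\,\Del_{x'} f_0(x',\sqrt{1-|x'|^2}).$$
It remains to verify that the numerical prefactor coincides with the $d_n$ listed in (\ref{8njv}). This is a routine Gamma simplification using the Legendre duplication formula
$$\Gam((n-1)/2)\,\Gam(n/2) = 2^{2-n}\sqrt{\pi}\,(n-2)!,$$
which together with the definition of $\del_n$ in (\ref{amyw}) produces $d_n$ in both the odd and even cases.

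The main obstacle, such as it is, is the justification of interchanging $\Del_{x'}$ with the analytic continuation in $\a$; this is legitimate because the spatial differentiation acts on the output of the $a.c.$ process, and the same principle invoked in the proof of Lemma \ref{aau} (via \cite[Lemma 1.17]{Ru96}) applies here. Once this technicality is dispatched, the proof is the two-step bookkeeping described above.
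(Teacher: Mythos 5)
Your proposal is correct and follows exactly the route the paper intends: the paper states that Theorem \ref{774} is implied by Lemmas \ref{aaus} and \ref{aauss}, and your argument fills in precisely that implication, equating the two expressions for $\underset{\a=3-n}{a.c.}(PN^\a f)(x)$, inverting the Riesz potential in the $x'$-variable via $-\Del I^2\tilde f=\tilde f$, and reconciling the constants $\del_n$ and $d_n$ through the Legendre duplication formula. The computations (including the identity $\tilde f(x')=x_{n+1}^{-1}f(x)$ and the constant check) are all accurate, so nothing is missing.
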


\subsection{The case $n=2$}\label{5477}
We keep the notation of  section  \ref{98m}. Let
\be\label {87f}
(I_{\ast}f)(x)\equiv (I_{\ast}f)(x',  \sqrt{1-|x'|^2})
=\frac{1}{2\pi}\intl_{B_\theta} f(y)\log|x'-y'|\,dy,\ee
so that 
\be\label{laz} \Delta_{x'} (I_{\ast}f)(x)=(1-|x'|^2)^{-1/2}\,f (x)=f (x)/x_3. \ee

\begin{lemma}\label{nn7s} Let $f$ be a $C^\infty$  function supported
in $B_\theta$. Then
\be\label{kuks}
(I_{\ast}f)(x)\!=\!\frac{1}{|\partial  B_\theta|}\intl_{\partial B_\theta
}\int\limits_{-1}^{1}\!\left(Mf\right)  \left(\xi,\t\right)  \log |\t-
\xi\cdot x |\,d\t\,d\xi\!+\!c_f,
\ee
$$
c_f=-\frac{1}{2\pi } \, \Big (\log \, \frac{\sin \theta}{2} \Big)\,\intl_{B_\theta} f(y)\, dy.
$$
\end{lemma}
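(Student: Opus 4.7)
The plan is to mimic the Euclidean proof of Lemma \ref{nn7}, replacing the Euclidean kernel $\log|t^2 - |y-\xi|^2|$ by its spherical analogue. Define
\[
(N_\ast f)(\xi, t) = \int_{B_\theta} f(y)\,\log|\xi\cdot y - t|\, dy, \qquad (\xi,t)\in \partial B_\theta \times (-1,1),
\]
and evaluate $(PN_\ast f)(x)$ in two different ways.

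First, by Fubini together with the identity $(PN_\ast f)(x) = |\partial B_\theta|^{-1}\int_{\partial B_\theta}(N_\ast f)(\xi,\xi\cdot x)\,d\xi$, I would write
\[
(PN_\ast f)(x) = \int_{B_\theta} f(y)\, k_\ast(x,y)\, dy, \quad k_\ast(x,y) = \frac{1}{|\partial B_\theta|}\int_{\partial B_\theta}\log|\xi\cdot(y-x)|\, d\xi.
\]
Using the factorization $|\xi\cdot(x-y)| = |h - \omega\cdot\sigma|\,|x'-y'|\,\sin\theta$ from \eqref{laac} (with $n=2$, so $\omega\in S^1$), and noting that the parametrization $\xi = e_3\cos\theta + \omega\sin\theta$ gives $d\xi = \sin\theta\, d\omega$ and $|\partial B_\theta| = 2\pi\sin\theta$, the kernel splits as
\[
k_\ast(x,y) = \frac{1}{2\pi}\int_{S^1}\log|h - \omega\cdot\sigma|\, d\omega + \log|x'-y'| + \log\sin\theta.
\]
The bound $|h|<1$ established in the proof of Lemma \ref{aaus} lets me apply Lemma \ref{ooim}, which evaluates the $S^1$-integral to $-2\pi\log 2$. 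Hence $k_\ast(x,y) = \log|x'-y'| + \log(\sin\theta/2)$, and therefore, in view of \eqref{87f},
\[
(PN_\ast f)(x) = 2\pi\,(I_\ast f)(x) + \log(\sin\theta/2)\int_{B_\theta} f(y)\, dy.
\]

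Next I would express $(PN_\ast f)(x)$ in terms of spherical means. Applying the coarea-type identity \eqref{8ghv} with $n=2$ (so $\sigma_{n-1}=2\pi$ and $(1-\tau^2)^{n/2-1}\equiv 1$) and $a(\tau)=\log|\tau - t|$ gives
\[
(N_\ast f)(\xi, t) = 2\pi \int_{-1}^{1}(Mf)(\xi,\tau)\,\log|\tau - t|\, d\tau,
\]
so that
\[
(PN_\ast f)(x) = \frac{2\pi}{|\partial B_\theta|}\int_{\partial B_\theta}\int_{-1}^{1}(Mf)(\xi,\tau)\,\log|\tau - \xi\cdot x|\, d\tau\, d\xi.
\]
Equating the two expressions for $(PN_\ast f)(x)$ and dividing by $2\pi$ yields \eqref{kuks} with the claimed constant $c_f$.

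The only delicate point is the verification that $|h|<1$ for all $x,y$ in the support of $f$, which forces strict interiority and is needed to invoke Lemma \ref{ooim}; this, however, is precisely the computation already carried out inside the proof of Lemma \ref{aaus}, so it can be reused verbatim. All other steps are bookkeeping: a change of order of integration, the spherical factorization \eqref{laac}, the value \eqref{n2.3}, and the radial reduction \eqref{8ghv}.
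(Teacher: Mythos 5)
Your proposal is correct and follows essentially the same route as the paper's own proof: define $N_\ast f$, evaluate $(PN_\ast f)(x)$ once through the kernel $k_\ast(x,y)$ via the factorization \eqref{laac} and Lemma \ref{ooim}, and once through the radial reduction \eqref{8ghv}, then compare. The only difference is cosmetic — you make explicit the bookkeeping ($d\xi = \sin\theta\, d\omega$, $|\partial B_\theta| = 2\pi\sin\theta$) and the reuse of the bound $|h|<1$ from Lemma \ref{aaus}, both of which the paper leaves implicit.
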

\begin{proof}  Let
$$
\left( N_{\ast}f\right)(\xi, t) =\intl_{B_\theta} f(y)\log|\xi\cdot y-t|\,dy, \qquad (\xi, t)\in\partial B_\theta \times  (-1,1).
$$
Changing the order of
integration, owing to (\ref{laac}),  we obtain
$$ (PN_{\ast} f)(x)=\intl_{B_\theta} f(y)\,k_{\ast} (x,y)\, dy,
$$ where
 \bea k_{\ast} (x,y)&=& \frac{1}{|\partial  B_\theta|}\,\intl_{\partial B_\theta}  \log|\xi\cdot (x-y)|\,d\xi\nonumber\\
 &=& \frac{1}{2\pi}\,\intl_{S^1} [\log|x'-y'|+\log \, \sin \theta+\log  | h-\om \cdot \sig |]\, d\om\nonumber\\
 &=& \log|x'-y'|+\log \, \sin \theta +\frac{g_*}{2\pi},\nonumber\eea
  $$
g_* \equiv\int\limits_{S^{1}}\log  | h-\om \cdot \sig |\, d\om=2\int_{-1}^{1}\frac{\log\left\vert t-h\right\vert }
{\sqrt{1-t^{2}}}\,dt=-2\pi\log2;
$$
cf. Lemma \ref{ooim}.  This gives
\be\label{90ks}
(PN_{\ast} f)(x)=2\pi\,(I_{\ast}f)(x)+\Big (\log \, \frac{\sin \theta}{2} \Big)\,\intl_{B_\theta} f(y)\, dy.\ee
On the other hand, by (\ref{8ghv}), 
\be\label{m23}
(PN_{\ast} f)(x)=\frac{1}{ \sin \theta} \,\intl_{\partial B_\theta} \intl_{-1}^1 (Mf)(\xi, \t) \log |\t -\xi \cdot x|\, d\t d\xi.
\ee
Comparing (\ref{m23})  with (\ref{90ks}),  we obtain the result.
\end{proof}

Lemma \ref{nn7s}  allows us complete  Theorem \ref{774} in the following
way.$\smallskip$

\begin{theorem} 
\label{Theorem n2.1s2} Let $f$ be an infinitely differentiable function supported
in the spherical  cap $B_\theta 
=\{x\in S^2:  x\cdot e_{3} >\cos \,\theta\}$, $ \theta \in (0,\pi/2]$. Then
\begin{equation}\label {fins}
f\left(x\right)  =\frac{x_3}{2\pi\,  \sin \theta}\,\Delta_{x'}  \intl_{\partial B_\theta
}\int\limits_{-1}^{1}\!\left(Mf\right)  \left(\xi,\t\right)  \log |\t-
\xi\cdot x |\,d\t\,d\xi.
\end{equation}
\end{theorem}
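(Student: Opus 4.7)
The plan is to read off Theorem \ref{Theorem n2.1s2} as an immediate corollary of Lemma \ref{nn7s} by applying $\Delta_{x'}$ to both sides of the representation (\ref{kuks}). The essential observation is that the additive term $c_f$ in (\ref{kuks}) is a constant independent of $x$, so it is annihilated by the Laplacian, while the logarithmic potential $I_*f$ was designed in (\ref{87f})--(\ref{laz}) precisely so that $\Delta_{x'}(I_*f)(x)=f(x)/x_3$.

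First, I would compute $|\partial B_\theta|$, the length of the circle $\partial B_\theta \subset S^2$. Parametrizing the boundary as $\{(\sin\theta\cos\varphi,\sin\theta\sin\varphi,\cos\theta):\varphi\in[0,2\pi)\}$, one obtains $|\partial B_\theta|=2\pi\sin\theta$. Substituting this into (\ref{kuks}) gives
\[
(I_{\ast}f)(x)=\frac{1}{2\pi\sin\theta}\intl_{\partial B_\theta}\intl_{-1}^{1}(Mf)(\xi,\tau)\log|\tau-\xi\cdot x|\,d\tau\,d\xi+c_f.
\]

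Next, I apply $\Delta_{x'}$ to both sides. Since $c_f$ does not depend on $x$, $\Delta_{x'}c_f=0$. On the left-hand side, by (\ref{laz}), $\Delta_{x'}(I_{\ast}f)(x)=f(x)/x_3$. Multiplying through by $x_3$ produces exactly the claimed formula (\ref{fins}). The interchange of $\Delta_{x'}$ with the integrals on the right-hand side is legitimate because the support of $f$ is separated from $\partial B_\theta$, so the inner integral over $\tau$ produces a smooth function of $x$ away from the support constraints, and the outer integral over $\xi$ is a smooth integrable family; this can be justified by the standard dominated convergence argument for differentiating under the integral sign, as was implicit in deriving (\ref{laz}).

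The only step that requires any care is checking the geometric constant $|\partial B_\theta|=2\pi\sin\theta$ and confirming that the hypothesis $\theta\in(0,\pi/2]$ ensures $\sin\theta\neq 0$ so that the factor is well-defined; both are routine. Thus there is no real obstacle: the nontrivial work has already been done in Lemma \ref{ooim} (the key integral evaluation) and Lemma \ref{nn7s} (the comparison between the potential and the backprojected spherical means), and Theorem \ref{Theorem n2.1s2} is then a one-line differentiation.
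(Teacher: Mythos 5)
Your proposal is correct and is essentially identical to the paper's own argument: the paper likewise obtains Theorem \ref{Theorem n2.1s2} directly from Lemma \ref{nn7s} by applying $\Delta_{x'}$ to (\ref{kuks}), annihilating the constant $c_f$, invoking (\ref{laz}) for $\Delta_{x'}(I_{\ast}f)(x)=f(x)/x_3$, and using $|\partial B_\theta|=2\pi\sin\theta$. The only cosmetic difference is that no interchange of $\Delta_{x'}$ with the double integral is actually needed, since the Laplacian remains outside the integral in the final formula exactly as it appears in (\ref{fins}).
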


Formula  (\ref{fins})  can be formally obtained from  (\ref {8njv})  by setting $n=2$. 

\section{The inverse problem for  the EPD
 equation on $S^n$}\label {45er}

 The Euler-Poisson-Darboux  equation on $S^n$ has the form
\be\label {papv}
\tilde\square_\a u\equiv \del_x u - u_{\om\om} - (n-1+2\a) \cot \om\, u_\om + \a (n-1+\a)u=0.
\ee
Here $x\in S^n$ is the space variable,  $\om\in (0,\pi)$ is the time variable,  $\del_x$ is the relevant Beltrami-Laplace operator. For the sake of simplicity, we restrict ourselves to the case  $Re \, \a>-n/2$. In this case the corresponding 
 Cauchy problem
 \be\label {papoz}  \tilde\square_\a u=0, \qquad u(x,0) = f(x), \quad  u_\om (x, 0) = 0,\ee
with   $f\in C^\infty  (S^n)$ has a solution  
$u(x,\om)=(M^\a f)(x, \cos\, \om)$, where $(M^\a f)(x, t) $ is defined as analytic continuation of the  integral
\be\label{nzb}
(M^\a f)(x,t) = \frac{c_{n, \a}}{ (1-t^2)^{\a-1+n/2}}\intl_{x\cdot y > t} (x\cdot
 y - t)^{\a -1} f(y) dy, 
\ee
$$
c_{n, \a} = 2^{\a-1} \pi^{-n/2} \Gamma (\a + n/2) /\Gamma(\a), \quad Re \,\a > 0, 
\quad   t \in (-1, 1); 
$$
see \cite {O2},  \cite[p. 179] {Ru00}, and references therein.

Let $ B_\theta=\{x\in S^n:  x\cdot e_{n+1} >\cos \,\theta\}$ be the spherical cap
 of a fixed
 radius  $\theta \in (0,\pi/2]$ and let $\partial B_\theta$ be the boundary of  $B_\theta$. Our aim is to solve the following 
 
 {\bf Inverse problem. } 
 {\it Suppose that the values  $g(\xi,\om)$  of the solution    of  
  (\ref{papoz})  are known for all $(\xi, \om)\in \partial B_\theta \times (0,\pi)$. Reconstruct the initial function $f\in C^\infty  (S^n)$, provided that the support of $f$ lies in  $B_\theta$.}

This problem can be solved using the results of the previous section. 
Assuming $Re \,\a > 0$, we pass to spherical polar coordinates and write (\ref{nzb}) as

 \[
 (M^\a f)(\xi, t) =  {c_{n, \a}\, \sig_{n-1}\over (1-t^2)^{\a-1+n/2}} \intl_{t}^1(\t - t)^{\a -1} (Mf)(\xi, \t)\,(1-\t^2)^{n/2 -1}\, d\t.\]
Then we set
$$
F_\xi (t)=(Mf)(\xi, t)\,(1-t^2)^{n/2 -1}, $$
$$
G_{\a,\xi} (t)=\frac{2^{1-\a} \pi^{n/2}}{ \Gamma (\a + n/2)\, \sig_{n-1}}\, (1-t^2)^{\a-1+n/2} g(\xi,\cos^{-1} t),$$
and  invoke
Riemann-Liouville fractional integrals \cite{SKM}
\be\label{rli} (I_-^\a u)(t)=\frac{1}{\Gam (\a)}\intl_{t}^1(\t - t)^{\a -1} u(\t)\, d\t, \qquad Re \,\a > 0.\ee
Thus, if $Re \,\a > 0$, then
 \be\label{opsa}
(I_-^\a F_\xi)(t)=G_{\a,\xi} (t).
\ee
Since $f$ is infinitely differentiable  and the support  of $f$  is separated from the boundary $\partial B_\theta$,  then $F_\xi$ is infinitely differentiable on $(-1,1)$ uniformly in $\xi$ and $\supp \,F_\xi$ does not meet the endpoints $\pm1$. It follows that 
(\ref{opsa}) extends by analyticity to all complex $\a$, and we have
$$
(Mf)(\xi, t)=(1-t^2)^{1-n/2} (I_-^{-\a} G_{\a,\xi})(t)
$$
where $I_-^{-\a}$  is understood in the sense of analytic continuation.
Now Theorem \ref{774}  yields the following  explicit solution of  our inverse problem. 

\begin{theorem}
Let  $ B_\theta$ be the spherical cap on $S^n$
 of  radius  $\theta \in (0,\pi/2]$,
$$
G_{\a,\xi}(t)\!=\!\frac{2^{1-\a} \pi^{n/2}}{ \Gamma (\a \!+\! n/2)\, \sig_{n-1}}\, (1-t^2)^{\a-1+n/2} g(\xi,\cos^{-1} t),\qquad Re \,\a\!>\!-n/2,$$
where  $g$ is a given function on $\partial B_\theta \times (0,\pi)$. If the initial function $f$ in the 
 Cauchy problem (\ref {papoz}) is  infinitely differentiable  and the  support of  $f$ lies in the interior of  $B_\theta$,  then
 $f$  can be reconstructed by the formula 
\be f(x)\!=\!  \frac{d_n\, x_{n+1}}{\sin\theta}\, \Del_{x'} f_0 (x', \sqrt{1\!-\!|x'|^2}),\quad d_{n}\!=\!\frac{(-1)^{[n/2-1]}}{2^{n-1} \pi^{n/2 -1}\Gam(n/2)},\ee
where  $f_0 (x)\equiv f_0 (x', \sqrt{1-|x'|^2})$ has the following form:
$$
f_0 (x)=-\intl_{\partial B_\theta} (d/dt)^{n-3} [(I_-^{-\a}  G_{\a,\xi})(t)]\Big |_{t=\xi\cdot x}\, d\xi
$$
if $n=3,5, \ldots\, $, and
$$
f_0 (x)=\frac{1}{\pi} \intl_{\partial B_\theta} d\xi
\intl_{\cos\, 2\theta}^1  (d/dt)^{n-2} [(I_-^{-\a}  G_{\a,\xi})(t)]\,\log |t-\xi\cdot x|\,dt$$
if $n=2,4,6,\ldots\,$.
\end{theorem}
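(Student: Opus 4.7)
The strategy is to reduce this inverse problem to the inversion of the spherical mean Radon transform on $S^n$, which has already been solved in Theorem \ref{774}. The link between the boundary trace $g(\xi,\om)$ of the EPD solution and the ordinary spherical means $(Mf)(\xi,t)$ is a one--dimensional fractional integral of Riemann--Liouville type, which becomes invertible by analytic continuation once smoothness of $Mf$ in the $t$-variable is secured by the support condition on $f$.

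\textbf{Step 1: Reduction to a fractional integral equation.} Starting from the explicit formula (\ref{nzb}) for $M^\a f$, restricting to $x=\xi\in\partial B_\theta$, and passing to spherical polar coordinates centred at $\xi$, for $\mathrm{Re}\,\a>0$ I obtain
\[ (M^\a f)(\xi,t)=\frac{c_{n,\a}\,\sig_{n-1}}{(1-t^2)^{\a-1+n/2}}\intl_t^1(\t-t)^{\a-1}(Mf)(\xi,\t)(1-\t^2)^{n/2-1}\,d\t. \]
Setting $F_\xi(\t)=(Mf)(\xi,\t)(1-\t^2)^{n/2-1}$ and absorbing the prefactor into the normalization defining $G_{\a,\xi}$, the above reads as the Riemann--Liouville identity $(I_-^\a F_\xi)(t)=G_{\a,\xi}(t)$ in the notation of (\ref{rli}).

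\textbf{Step 2: Analytic continuation and recovery of the spherical means.} Since $f\in C^\infty(S^n)$ has support contained in the interior of $B_\theta$, the function $F_\xi$ is smooth and compactly supported in the open interval $(-1,1)$, uniformly in $\xi$. In particular, the left--hand side $(I_-^\a F_\xi)(t)$ extends to an entire function of $\a$, and inversion is legitimate in the sense of analytic continuation; one writes $F_\xi=I_-^{-\a}G_{\a,\xi}$, so that
\[ (Mf)(\xi,t)=(1-t^2)^{1-n/2}\,(I_-^{-\a}G_{\a,\xi})(t). \]
This recovers the spherical means of $f$ from the cylindrical data $g$ at the cost of a fractional derivative in the time variable (with the prefactor in $G_{\a,\xi}$ carefully matched to absorb the normalization constant $c_{n,\a}\sig_{n-1}$).

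\textbf{Step 3: Applying Theorem \ref{774} and identification of the formula.} The integrands in both cases of Theorem \ref{774} involve derivatives in $t$ of the combination $(Mf)(\xi,t)(1-t^2)^{n/2-1}=F_\xi(t)$, which by Step 2 equals $(I_-^{-\a}G_{\a,\xi})(t)$. Substituting this expression into the two inversion formulas of Theorem \ref{774}, with the same constant $d_n$ and the same expression for $f_0$ in terms of derivatives and logarithmic convolutions, produces exactly the asserted reconstruction.

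The only genuine obstacle lies in Step 2: justifying the interchange of analytic continuation with integration/differentiation in $\t$, which requires the uniform smoothness and compact support of $F_\xi$ (away from $\pm 1$) to hold uniformly in the parameter $\xi\in\partial B_\theta$. Once this is settled, the remaining work is bookkeeping of the normalizing constants appearing in $c_{n,\a}$, $\sig_{n-1}$ and the prefactor of $G_{\a,\xi}$.
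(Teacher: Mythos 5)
Your proposal is correct and follows essentially the same route as the paper: reduce the boundary data to the Riemann--Liouville equation $(I_-^\a F_\xi)(t)=G_{\a,\xi}(t)$ with $F_\xi(t)=(Mf)(\xi,t)(1-t^2)^{n/2-1}$, invert by analytic continuation in $\a$ (justified by the smoothness and support separation of $F_\xi$, uniformly in $\xi$), and substitute $F_\xi=I_-^{-\a}G_{\a,\xi}$ into the two cases of Theorem \ref{774}. The paper's argument in Section \ref{45er} is exactly this, including the same normalization bookkeeping that matches $c_{n,\a}\sig_{n-1}$ against the prefactor of $G_{\a,\xi}$.
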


We recall that the case $\a=(1-n)/2$ in this theorem gives a solution of the relevant inverse problem for the wave equation on $S^n$ in the framework of the formal spherical TAT model.

\section{Spherical Means in the Hyperbolic Space}
Most of the facts listed below can be found in \cite{VK2}. Let $\bbe^{n, 1}, \; n \ge 2$, be the real pseudo-Euclidean space
of points $x = (x_1, \dots, x_{n+1})$ with the inner product 
\be\label{201}
[x, y] = -x_1y_1-\dots - x_ny_n+x_{n+1} y_{n+1}.\ee The  hyperbolic
space $\bbh^n$ is interpreted as the ``upper" sheet of the two-sheeted
hyperboloid \be  \bbh^n = \{ x \in \bbe^{n, 1}: [x, x] = 1,
x_{n+1} > 0\}.\ee 
 The 
hyperbolic coordinates of a point 
$x = (x_1, \ldots, x_{n +1}) \in \bbh^n$ are defined by
\be\label {203}
 \left\{ \!
 \begin{array} {l} x_1 = \sinh r \sin \om_{n -1} \ldots 
\sin \om_2 \sin \om_1, \\
 x_2 = \sinh r \sin \om_{n -1} \ldots \sin \om_2 \cos \om_1, \\
 ...........................................................................................\\
x_n = \sinh r \,\cos \om_{n -1}, \\
x_{n +1} = \cosh  r ,\\
 \end{array}
\right.\ee
where 
$$0 \le \om_1 < 2 \pi; \quad 0 \le \om_j < \pi, \quad  1 < j \le n - 1; \quad  0 \le r < \infty.$$ By 
  (\ref{203}), each  $x \in \bbh^n$ can be represented as 
\be\label{oo90} x =  \om \,\sinh r  +  e_{n+1}\,\cosh r \ = (\om\,\sinh r, \cosh r)\ee where 
$\om$ is a point of the unit sphere $S^{n -1}$ in $\bbr^n$ with 
 Euler angles
 $\om_1, \ldots, \om_{n -1}$.  We regard $\bbr^n$ as the hyperplane $x_{n +1} = 0$ in $E^{n, 1}$. The 
 invariant  (with respect to hyperbolic motions) measure $dx $ in $ \bbh^n$ is given by 
$ d x = \sinh^{n -1} r \, d \om d r, $
 where $d \om$ is the surface element of $S^{n -1}$.  
 The geodesic distance between 
points $x$ and $ y$ in $\bbh^n$ is defined by $$
\dist (x,y)=\cosh^{-1} [x,y]\qquad \mbox{\rm (i.e.,  $\cosh\,\dist (x,y)=[x,y]$)}.$$ 
 Given  $x\in \bbh^n$ and $t>1$, let 
\be \label {77bh}
(Mf)(x, t)=\frac{(t^2-1)^{(1-n)/2}}{\sig_{n-1}}\intl_{[x, y] = t}  f (y)\, d\sig (y) \ee
be the mean value of  $f$  over the planar section  $\{y\in  \bbh^n: [x, y] = t\}$.

\begin{figure}
\centerline{
\scalebox{0.6}{\includegraphics{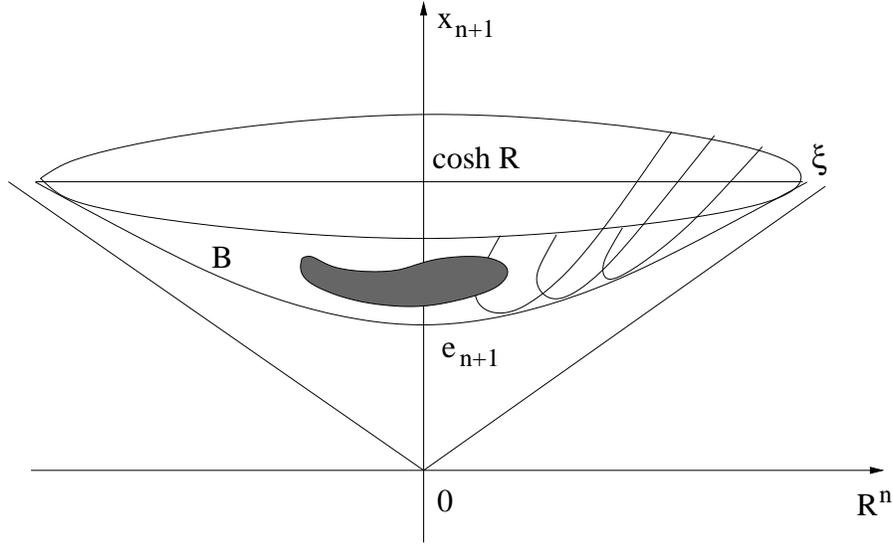}}}
\caption{The hyperbolic case.}
\label{fig3}
\end{figure}

 As before, our aim is to reconstruct a function $f\in C^\infty  (\bbh^n)$ under the following assumptions:

\noindent (a) The support of $f$ lies in the geodesic ball (Fig.3)
$$ B=\{x\in  \bbh^n: \dist (x,e_{n+1})<R\}=\{x\in  \bbh^n: x_{n+1} <\cosh R\},$$
where $e_{n+1}=(0, \ldots, 0,1)$ is the origin of $\bbh^n$ and $R>0$ is  fixed.

\noindent (b) The mean values (\ref{77bh}) are known for all   $x=\xi\in   \partial B $ and all $t>1$, where
$\partial B $ is the boundary of  $B$.

Owing to (\ref{oo90}), we can write $x\in \bbh^n$ as
$$
x=(x',  \sqrt{1+|x'|^2}),  \qquad x'=(x_1, \ldots, x_n,0) \in \bbr^n, $$
so that
\be\label {koj} 
\intl_{\bbh^n} f(x) \, dx\!=\!\intl_{\bbr^n} f(x',  \sqrt{1\!+\!|x'|^2}) \,\rho (x')\, dx', \quad  \rho (x')\!=\!\sqrt{\frac{1\!+\!2|x'|^2}{1\!+\!|x'|^2}}.\ee
We introduce the  ``back-projection" operator $P$ that sends  functions on $\partial B \times  (1, \infty)$ to  functions on $B$ by the formula
 \be\label{pfnh}
(PF)(x)=\frac{1}{|\partial B |}\,\intl_{ \partial B } F(\xi, [\xi, x])\,
d  \sig(\xi), \qquad x \in B.\ee
   Let 
 $$
 \tilde B=\{x'\in \bbr^n: |x'|< \sinh R\}$$
be the orthogonal projection of $B$ onto 
the  hyperplane $ x_{n+1}=0$. If $\xi=e_{n+1}\,\cosh R +\om\, \sinh R$,  $\;\om\in S^{n-1}$, and  $x=(x',x_{n+1})\in B$, then
 $$
  [\xi, x]= \sqrt{1+|x'|^2} \, \cosh R -(x'\cdot\om)\, \sinh R,$$ and $(PF)(x)$
  is actually a function of $x'\in \tilde B$.  We denote this function by 
 $(\tilde PF)(x')$.

\subsection{The case $n>2$}  Let, as above,    $\xi \in \partial B, \; t>1$. Consider the  analytic family of operators
\be
(N^\a f)(\xi, t)=\intl_{B} \frac{|[\xi, y] -t|^{\a -1}}{\Gam (\a/2)}\, f(y)\, dy, \qquad Re \, \a >0, \ee

\begin{lemma}\label{aauh}  If  $f\in C^\infty (\bbh^n)$, $\supp f \subset  B$, then
\be\label {anah} \underset
{\a=3-n}{a.c.} (\tilde PN^\a
f)(x')=\frac{\Gam (n/2)\, (\sinh R)^{2-n}}{\pi^{1/2}} \intl_{ \tilde B} \frac {\tilde f(y')}{|x'-y'|^{n-2}}\,dy',\ee
\be
\tilde f(y')=f(y',  \sqrt{1+|y'|^2})\,\sqrt{\frac{1+2|y'|^2}{1+|y'|^2}}.
\ee
\end{lemma}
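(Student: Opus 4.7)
The plan is to mirror the spherical argument of Lemma \ref{aaus}, with spherical trigonometry replaced by hyperbolic trigonometry and the Euclidean scalar product on $\bbr^{n+1}$ replaced by the Lorentzian form (\ref{201}). For $\mathrm{Re}\,\a>0$, I would first interchange the order of integration in $(\tilde P N^\a f)(x')$ and use the measure-conversion formula (\ref{koj}) to recast the $y$-integration over $B\subset\bbh^n$ as an integration over $\tilde B\subset\bbr^n$; this yields $(\tilde P N^\a f)(x')=\intl_{\tilde B}\tilde f(y')\,k_\a(x',y')\,dy'$ with
\[
k_\a(x',y')=\frac{1}{|\partial B|\,\Gam(\a/2)}\intl_{\partial B}|[\xi,y]-[\xi,x]|^{\a-1}\,d\sig(\xi).
\]
Parametrizing $\xi\in\partial B$ as $\xi=(\omega\sinh R,\cosh R)$ with $\omega\in S^{n-1}$ gives $d\sig(\xi)=(\sinh R)^{n-1}d\omega$ and $|\partial B|=\sig_{n-1}(\sinh R)^{n-1}$; a short calculation using (\ref{201}) factors
\[
[\xi,x-y]=\sinh R\cdot|x'-y'|\cdot(h-\omega\cdot\sig),
\]
with $\sig=(x'-y')/|x'-y'|$ and $h=\coth R\cdot(x_{n+1}-y_{n+1})/|x'-y'|$. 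By rotational invariance on $S^{n-1}$, the $\omega$-integral reduces to $\sig_{n-2}\,g_\a(h)$ in the notation of (\ref{den}), giving $k_\a(x',y')=(\sig_{n-2}/\sig_{n-1})(\sinh R\cdot|x'-y'|)^{\a-1}g_\a(h)$.

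The main obstacle is the verification that $|h|<1$, needed before Lemma \ref{lem3}(ii) can be applied. Writing $x=(u\sinh\gam,\cosh\gam)$ and $y=(v\sinh\del,\cosh\del)$ with $u,v\in S^{n-1}$ and $\gam,\del\in[0,R)$, the inequality $u\cdot v\le 1$ bounds $|x'-y'|$ from below by $|\sinh\gam-\sinh\del|$, so assuming without loss of generality $\gam\le\del$,
\[
|h|\tanh R\le\frac{\cosh\del-\cosh\gam}{\sinh\del-\sinh\gam}=\tanh\tfrac{\gam+\del}{2}<\tanh R,
\]
by the sum-to-product identities for $\cosh\del-\cosh\gam$ and $\sinh\del-\sinh\gam$. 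This is the natural hyperbolic counterpart of the bound $|h|\cot\theta<\tan\theta$ that appeared in the proof of Lemma \ref{aaus}.

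Finally, applying Lemma \ref{lem3}(ii) to obtain $\underset{\a=3-n}{a.c.}\,g_\a(h)=\Gam((n-1)/2)$ and simplifying the prefactor using $\sig_{n-1}=2\pi^{n/2}/\Gam(n/2)$ and $\sig_{n-2}=2\pi^{(n-1)/2}/\Gam((n-1)/2)$ gives $\underset{\a=3-n}{a.c.}\,k_\a(x',y')=\pi^{-1/2}\Gam(n/2)(\sinh R)^{2-n}|x'-y'|^{2-n}$. The interchange of analytic continuation with the $y'$-integration, justified by the uniformity in $h$ supplied by Lemma \ref{lem3}(i) (cf.\ \cite[Lemma 1.17]{Ru96}), then yields (\ref{anah}).
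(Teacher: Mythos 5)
Your proof is correct, and its skeleton --- interchanging the order of integration, factoring $[\xi,x-y]=\sinh R\,|x'-y'|\,(h-\om\cdot\sig)$, reducing the $\om$-integral by rotational invariance to $\sig_{n-2}\,g_\a(h)$, and then applying Lemma \ref{lem3} --- is exactly the paper's. Where you genuinely diverge is in the one nontrivial step, the verification that $|h|<1$. You prove it in geodesic polar coordinates, writing $x=(u\sinh\gam,\cosh\gam)$, $y=(v\sinh\del,\cosh\del)$, bounding $|x'-y'|\ge|\sinh\del-\sinh\gam|$ via $u\cdot v\le 1$, and then invoking the half-angle identities
\[
\frac{\cosh\del-\cosh\gam}{\sinh\del-\sinh\gam}=\tanh\frac{\gam+\del}{2}<\tanh R
\]
(the degenerate case $\gam=\del$ gives $h=0$ outright, which you should note). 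This is precisely the hyperbolic transplant of the paper's own proof of the spherical Lemma \ref{aaus}, so your route makes the two constant-curvature cases run on a single template. The paper instead argues directly in the Euclidean projections: assuming WLOG $|y'|\le|x'|$ (so that $h\ge0$), it sets $a=|y'|$, $b=|x'|$, bounds $h\le f_a(b)\coth R$ with $f_a(b)=\bigl(\sqrt{1+b^2}-\sqrt{1+a^2}\bigr)/(b-a)$, shows $f_a$ is increasing on $(a,\infty)$ because $f_a'(b)$ has the sign of $D(a,b)=\sqrt{(1+a^2)(1+b^2)}-1-ab>0$, and finally checks that $f_a(\sinh R)\coth R<1$ reduces again to $D(a,\sinh R)>0$. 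Your version buys structural uniformity with the spherical case and avoids differentiating anything; the paper's version never introduces geodesic radii, staying entirely in the chart $x\mapsto x'$, and its key inequality $D(a,b)>0$ is just Cauchy--Schwarz for the vectors $(1,a)$ and $(1,b)$. Both arguments conclude identically, with the constant coming from $\sig_{n-2}\,\Gam((n-1)/2)/\sig_{n-1}=\Gam(n/2)/\pi^{1/2}$.
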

\begin{proof} For $Re \, \a >0$, changing the order of
integration, we obtain
$$
(PN^\a f)(x)\!=\!\intl_{B} \!f(y)\, k_\a (x,y) \, dy, \quad k_\a (x,y)\!=\!\frac{1}{|\partial B |}\,\intl_{\partial B} 
\frac{|[\xi, (x\!- \!y)]|^{\a -1}}{\Gam (\a/2)}
d  \sig(\xi).$$
Since $\xi$ has the form $\xi=e_{n+1}\,\cosh R +\om\, \sinh R$,  $\om\in S^{n-1}$, then
\bea
|[\xi, (x- y)]|&=&|(x_{n+1}-y_{n+1})\, \cosh\,R-(x'- y')\cdot \om\, \,  \sinh R|\nonumber\\
&=& \left | h-\om \cdot  \sig\right |\, |x'- y'|\, \sinh R,\nonumber\eea
\be h=\frac{x_{n+1}-y_{n+1}}{|x'- y'|}\, \coth R, \qquad \sig=\frac{x'- y'}{|x'- y'|}.\ee
Hence,
\bea
k_\a (x,y)&=& \frac{(|x'- y'|\, \sinh R)^{\a -1}}{\sig_{n-1} } \intl_{S^{n-1}} \frac{|h-\om \cdot \sig |^{\a -1}}{\Gam (\a/2)}\, d\om\nonumber\\
&=& \frac{\sig_{n-2}\,(|x'- y'|\, \sinh R)^{\a -1}}{\sig_{n-1}} \, g_\a (h), \quad \mbox{\rm  cf. (\ref{jui}).} \nonumber\eea
If $|h|< 1$, we can
apply Lemma \ref{lem3}   and write the  integral over $B$ as that over $\tilde B\subset \bbr^n$. This will give the result.

It remains to show that $|h|< 1$.  By symmetry we may suppose that $\left\vert y^{\prime
}\right\vert \leq\left\vert x^{\prime}\right\vert ,$ which we shall do from
now on. Let 
$$
a=|y'|,\qquad b=|x'|, \qquad b_0=\sinh R.$$
Since $|x'-y'| \ge |x'|-|y'|$, then
$$
h\le f_a\left(  b\right)\,\coth R, \qquad  f_a\left(  b\right)  =\frac{\sqrt{1+b^{2}}-\sqrt{1+a^{2}}}{b-a}.$$
For $a$ fixed, the function  $ f_a\left(  b\right)$ is increasing in $(a,\infty),$ because
\[f_a^{\prime}\left(  b\right)  =\frac{D\left(  a,b\right)  }{\left(  b-a\right)
^{2}\sqrt{1+b^{2}}},     \quad D\left(  a,b\right)  =\sqrt{\left(  1+b^{2}\right)  \left(  1+a^{2}\right)
}-1-ab>0.\]
Hence, $h\le f_a (\sinh R)\,\coth R$. The right-hand side of this inequality is less than $1$.  Indeed, setting $b_0=\sinh R$, we have

\begin{align*}
&f_a (\sinh R)\,\coth R=
 \frac{\sqrt{1+b_0^{2}}-\sqrt{1+a^{2}}}{b_0-a}\, \frac{\sqrt{1+b_0^{2}}}{b_0}<1,\\
& \Leftrightarrow\frac{1+b_0^{2}-\sqrt{\left(  1+a^{2}\right)  \left(
1+b_0^{2}\right)  }}{\left(  b_0-a\right)  b_0}<1,\\
& \Leftrightarrow1+b_0^{2}-\sqrt{\left(  1+a^{2}\right)  \left(  1+b_0^{2}\right)
}<\left(  b_0-a\right) b_0,\\
& \Leftrightarrow0<D\left(  a,b_0\right)  \,.
\end{align*}
This completes the proof.
\end{proof}

\begin{lemma}\label{aaush}    
Let $f\in C^\infty (\bbh^n)$, $\; \supp f \subset B$.  Then
\be\label {anxh}  \underset
{\a=3-n}{a.c.} (PN^\a
f)(x)=\frac{\del_{n}}{(\sinh R)^{n-1}}\,  \intl_{\partial B} (d/dt)^{n-3} [(Mf)(\xi, t)\, (t^2-1)^{n/2 -1}]\Big |_{t=[\xi, x]}\, d\xi,\nonumber\ee
if  $n=3,5, \ldots\, $, and 
\bea\label {anx2h} &&\underset
{\a=3-n}{a.c.} (PN^\a
f)(x)=-\frac{\del_{n}}{\pi\,(\sinh R)^{n-1}} \,  \intl_{\partial B}  
d\xi\nonumber\\
&&\times\intl_1^{\cosh\, 2R}  (d/dt)^{n-2} [(Mf)(\xi, t)\, (t^2-1)^{n/2 -1}]\,\log |t-[\xi, x]|\,dt,\nonumber\eea
 if $n=4,6,\ldots\,$, where $\del_{n}$ is defined by (\ref{amyw}).
\end{lemma}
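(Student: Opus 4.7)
I would mirror the proof of Lemma \ref{aauss}, replacing the sphere $S^n$ by $\bbh^n$ and the inner product $\xi\cdot y$ by $[\xi, y]$. The first ingredient is the hyperbolic polar-type decomposition
\begin{equation*}
\int_{\bbh^n} f(y)\, a([\xi, y])\, dy = \sigma_{n-1}\int_1^\infty a(\tau)\, (Mf)(\xi, \tau)\, (\tau^2-1)^{n/2-1}\, d\tau,
\end{equation*}
which is the hyperbolic counterpart of (\ref{8ghv}). It is obtained by writing the volume form of $\bbh^n$ in geodesic polar coordinates centered at $\xi$, setting $\tau=\cosh s$, and comparing the induced measure on $\{y\in\bbh^n:[\xi, y] = \tau\}$ against the definition (\ref{77bh}) of $Mf$.

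Applying this identity with $a(\tau) = |\tau - t|^{\alpha-1}/\Gam (\alpha/2)$ and translating the variable of integration yields
\begin{equation*}
(N^\alpha f)(\xi, t) = \int_\bbr \frac{|\tau|^{\alpha-1}}{\Gam (\alpha/2)}\,\varphi_\xi(\tau + t)\, d\tau, \quad \varphi_\xi(\tau) = \sigma_{n-1}\,(Mf)(\xi, \tau)\,(\tau^2-1)_+^{n/2-1}.
\end{equation*}
Since $\supp f$ is separated from $\partial B$, the triangle inequality forces $\delta \leq \dist(\xi, y) \leq 2R - \delta$ for some $\delta > 0$ whenever $\xi \in \partial B$ and $y \in \supp f$; hence $\varphi_\xi \in C_c^\infty(\bbr)$ has support in $(\cosh \delta, \cosh(2R-\delta)) \subset (1, \cosh 2R)$, uniformly in $\xi$. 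Lemma \ref{lem1} now applies: for odd $n=2m+3$, part (i) returns $c_{m,1}\,\varphi_\xi^{(n-3)}(t)$; for even $n=2m+2$, part (ii) returns $-c_{m,2}\int_1^{\cosh 2R}\varphi_\xi^{(n-2)}(\tau)\log|\tau-t|\,d\tau$. A short computation --- using, in the even case, the reflection formula $\Gam (1/2-m)\,\Gam (1/2+m) = (-1)^m\pi$ --- identifies $c_{m,1}=\delta_n$ and $c_{m,2}=\delta_n/\pi$, with $\delta_n$ as in (\ref{amyw}).

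Finally, analytic continuation in $\alpha$ commutes with the integration defining $P$, and the geodesic sphere $\partial B$ in $\bbh^n$ has area $|\partial B| = \sigma_{n-1}(\sinh R)^{n-1}$. Substituting the two expressions above into
$(PN^\alpha f)(x) = |\partial B|^{-1}\int_{\partial B}(N^\alpha f)(\xi, [\xi, x])\, d\xi$
and expanding the derivatives $\varphi_\xi^{(k)}$ then produces the two claimed identities. The only genuine obstacle is the first step --- pinning down the hyperbolic polar decomposition with precisely the correct Jacobian, so that the $(\tau^2-1)^{n/2-1}$ coming from the measure meshes cleanly with the $(\tau^2-1)^{(1-n)/2}$ built into the definition of $Mf$. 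Once that is in place, the remainder of the argument is direct bookkeeping paralleling the proof of Lemma \ref{aauss}.
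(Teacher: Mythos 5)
Your proposal is correct and follows essentially the same route as the paper's proof: the hyperbolic decomposition formula, the rewriting of $N^\a f$ as a convolution against $\vp_\xi (\t)=\sig_{n-1}(Mf)(\xi,\t)(\t^2-1)_+^{n/2-1}$, the application of Lemma \ref{lem1}, and the commutation of analytic continuation with $P$ together with $|\partial B|=\sig_{n-1}(\sinh R)^{n-1}$. Your treatment of the support of $\vp_\xi$ and the explicit identification $c_{m,1}=\del_n$, $c_{m,2}=\del_n/\pi$ via the reflection formula simply fill in details the paper leaves implicit by referring back to Lemmas \ref{aau2} and \ref{aauss}.
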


\begin{proof} For $Re \, \a >0$, by making use of the formula
\be\label{8gh}
\intl_{\bbh^n} f(y)\, a ([\xi, y]) \, dy=\sig_{n-1}\intl_{1}^\infty  a(\t) (Mf)(\xi, \t)\,(\t^2-1)^{n/2 -1}\, d\t,\ee
we have
\bea
&&(N^\a f)(\xi, t)=\frac{\sig_{n-1}}{\Gam (\a/2)}\intl_{1}^\infty  (Mf)(\xi, \t)\,|\t -t|^{\a -1} (\t^2-1)^{n/2 -1}\, d\t\nonumber\\
&&= \intl_\bbr \frac{|\t|^{\a -1}}{\Gam ( \a/2)} \,  \vp_\xi (\t+t)\, d\t, \quad  \vp_\xi (\t)=\sig_{n-1}\,(Mf)(\xi, \t) \,(\t^2-1)^{n/2 -1}_+.\nonumber\eea
Since $f$ is smooth and the  support of $f$ is separated from the boundary 
 $\partial B$, then $(Mf)(\xi, \t)$ is smooth  in the $\t$-variable uniformly in $\xi$  and vanishes identically in the respective neighborhood of $\t= 1$. Thus,  
 Lemma \ref{lem1} yields the following equalities:

\noindent For $n=3,5, \ldots\, $:
$$
 \underset
{\a=3-n} {a.c.}\,(N^\a f)(\xi, t)\!=\!\del_{n} \,  \vp_\xi^{(n-3)} (t),$$

\noindent For $n=4,6,\ldots\,$:
$$
 \underset
{\a=3-n}{a.c.}\,(N^\a f)(\xi, t)=-\frac{\del_{n}} {\pi}\,\intl_1^{\cosh\, 2R}\!\!\vp_\xi^{(n-2)} (\t)\,\log |\t\!-\!t|\, d\t,$$
$\del_{n}$ being the constant from  (\ref{amyw}). Now the result follows; 
cf.  Lemmas \ref{aau2}  and \ref {aauss}.
\end{proof}

 Lemmas   \ref{aauh}  and   \ref{aaush} imply the following inversion result for the spherical means on $\bbh^n$. We recall that  $\Del_{x'}$ denotes the usual Laplace operator in the $x'$-variable.

\begin{theorem} \label{774h}  Let $n>2$. An infinitely differentiable function  $f$ supported
in the geodesic ball $B=\{x\in  \bbh^n: \dist (x,e_{n+1})<R\}$,  can be reconstructed from its spherical means 
$\left(Mf\right)  \left(\xi,\t\right)$, $(\xi, t)\in \partial B \times (1,\infty)$,  by the formula
\be\label{8nj} f(x)\!=\!  \frac{d_n\, x_{n+1}}{|x|\,\sinh R}\, \Del_{x'} f_0 (x', \sqrt{1\!-\!|x'|^2}),\quad d_{n}\!=\!\frac{(-1)^{[n/2-1]}}{2^{n-1} \pi^{n/2 -1}\Gam(n/2)},\nonumber\ee
where  $|x|=   \sqrt{|x'|^2+x^2_{n+1}} $    and $f_0 (x)\equiv f_0 (x', \sqrt{1+|x'|^2})$ has the following form:
$$
f_0 (x)=-\intl_{\partial B} (d/dt)^{n-3} [(Mf)(\xi, t)\, (t^2-1)^{n/2 -1}]\Big |_{t=[\xi, x]}\, d\xi,
$$
if $n=3,5, \ldots\, $, and
$$
f_0 (x)=\frac{1}{\pi} \intl_{\partial B} d\xi
\intl_1^{\cosh\, 2R}  (d/dt)^{n-2} [(Mf)(\xi, t)\, (t^2-1)^{n/2 -1}]\,\log |t-[\xi, x]|\,dt$$
if $n=4,6,\ldots\,$.
\end{theorem}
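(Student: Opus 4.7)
The proof parallels exactly the spherical case (Theorem \ref{774}), so the strategy is to compare two different analytic continuations of the same object $(\tilde PN^\a f)(x')$ at $\a=3-n$: one as a Riesz $2$-potential of the pushed-down function $\tilde f$ on the Euclidean disk $\tilde B \subset \bbr^n$ (Lemma \ref{aauh}), the other as an integral against the hyperbolic spherical means of $f$ (Lemma \ref{aaush}). After applying $-\Del_{x'}$ to both representations, the Riesz-potential side collapses to a multiple of $\tilde f(x')$, and the spherical-mean side becomes a multiple of $\Del_{x'} f_0(x',\sqrt{1+|x'|^2})$.

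More concretely, I would first invoke Lemma \ref{aauh} to write
$$\underset{\a=3-n}{a.c.}\,(\tilde P N^\a f)(x')=\frac{\Gam(n/2)(\sinh R)^{2-n}}{\pi^{1/2}}\intl_{\tilde B}\frac{\tilde f(y')}{|x'-y'|^{n-2}}\,dy',$$
which, by the very definition of the Riesz potential (\ref{rpo}) and the identity $-\Del I^2 \tilde f=\tilde f$ in (\ref{rpoi}), gives a constant multiple of $-\tilde f(x')$ after hitting it with $\Del_{x'}$. I would then invoke Lemma \ref{aaush}, which expresses the same analytic continuation as
$$\pm\,\frac{\del_n}{(\sinh R)^{n-1}}\intl_{\partial B}(\cdots)\,d\xi,$$
where the inside integrand is exactly (up to sign and a $1/\pi$ factor in the even case) the function $f_0(x)$ from the theorem statement. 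Equating the two representations and taking $-\Del_{x'}$ of both sides, the constants $\Gam(n/2)$, $(\sinh R)^{\pm}$, $\del_n$ and the numerical factor in (\ref{rpo}) combine into precisely the constant $d_n=(-1)^{[n/2-1]}/(2^{n-1}\pi^{n/2-1}\Gam(n/2))$ up to one final Jacobian.

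That final Jacobian is the only substantive point beyond what is already packaged in the two lemmas. Because $\tilde f(y') = f(y',\sqrt{1+|y'|^2})\,\rho(y')$ with $\rho(y')=\sqrt{(1+2|y'|^2)/(1+|y'|^2)}$ (see (\ref{koj})), recovering $f$ from $\tilde f$ requires dividing by $\rho$. For $x\in\bbh^n$ one has $x_{n+1}=\sqrt{1+|x'|^2}$ and $|x|^2=|x'|^2+x_{n+1}^2=1+2|x'|^2$, so
$$\frac{1}{\rho(x')}=\sqrt{\frac{1+|x'|^2}{1+2|x'|^2}}=\frac{x_{n+1}}{|x|},$$
which explains the extra factor $x_{n+1}/|x|$ in front of $\Del_{x'} f_0$ in (\ref{8nj}) (compared to the spherical case, where the analogous factor was simply $x_{n+1}$ coming from $\sqrt{1-|x'|^2}$).

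\textbf{Main obstacle.} The conceptual work has already been done in Lemmas \ref{aauh} and \ref{aaush}; what remains is purely bookkeeping. The only place where care is genuinely required is keeping track of three multiplicative bookkeeping items at once: (i) the constant $\Gam(n/2-1)/(4\pi^{n/2})$ implicit in passing from the integral kernel $|x'-y'|^{2-n}$ to $I^2$, (ii) the sign $(-1)^{[n/2-1]}$ hidden in $\del_n$, and (iii) the Jacobian $1/\rho(x')=x_{n+1}/|x|$. Once these are assembled, the constant $d_n$ and the formula (\ref{8nj}) drop out. An auxiliary check is that the formula correctly reduces in both parity cases to the two cases of $f_0$ given in the theorem, which follows automatically since the two cases of Lemma \ref{aaush} match those of (\ref{inv1})--(\ref{inv2}) in the Euclidean derivation.
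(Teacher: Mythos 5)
Your proposal is correct and follows exactly the paper's route: the paper states Theorem \ref{774h} as a direct consequence of Lemmas \ref{aauh} and \ref{aaush}, which is precisely your comparison of the two analytic continuations, followed by applying $\Del_{x'}$, the Riesz-potential identity (\ref{rpoi}), and division by the Jacobian $\rho(x')$. Your identification $1/\rho(x')=x_{n+1}/|x|$ and the assembly of $\Gam(n/2-1)/(4\pi^{n/2})$, $\del_n$, and the duplication formula into $d_n$ match the paper's bookkeeping (cf. (\ref{koj}) and (\ref{lazh})).
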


\subsection{The case $n=2$}  The argument  follows Section \ref{5477} almost verbatim. 
 Let
\be\label {87fh}
(I_{\ast}f)(x)=\frac{1}{2\pi}\intl_{B} f(y)\log|x'-y'|\,dy,\ee
so that 
\be\label{lazh} \Delta_{x'} (I_{\ast}f)(x)=\tilde f (x')=|x|\, f (x)/x_3. \ee

\begin{lemma}\label{nn7h} If $f$ be a $C^\infty$  function supported
in $B$, then
\be\label{kukh}
(I_{\ast}f)(x)\!=\!\frac{1}{|\partial  B|}\intl_{\partial B
}\int\limits_{1}^{\infty}\!\left(Mf\right)  \left(\xi,\t\right)  \log |\t-
[\xi, x] |\,d\t\,d\xi\!+\!c_f,
\ee
$$
c_f=-\frac{1}{2\pi } \, \Big (\log \, \frac{\sinh R}{2} \Big)\,\intl_{B} f(y)\, dy.
$$
\end{lemma}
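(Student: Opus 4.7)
The plan is to mimic the two-hemisphere argument from Lemma \ref{nn7s}, computing the back-projection of a logarithmic kernel in two different ways and then equating the results.

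\textbf{Step 1 (setup).} I would define the auxiliary operator
\[
(N_{\ast}f)(\xi,t)=\intl_{B} f(y)\,\log|[\xi,y]-t|\,dy, \qquad (\xi,t)\in \partial B \times (1,\infty),
\]
and compute $(PN_{\ast}f)(x)$ by the back-projection (\ref{pfnh}).

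\textbf{Step 2 (first expression, via interchange).} Interchanging the order of integration gives $(PN_{\ast}f)(x)=\int_B f(y)\,k_{\ast}(x,y)\,dy$ with
\[
k_{\ast}(x,y)=\frac{1}{|\partial B|}\intl_{\partial B}\log|[\xi,(x-y)]|\,d\sigma(\xi).
\]
Parametrizing $\xi=e_{n+1}\cosh R+\omega\sinh R$, $\omega\in S^{1}$, and using the identity derived in the proof of Lemma \ref{aauh},
\[
|[\xi,(x-y)]|=|h-\omega\cdot\sigma|\,|x'-y'|\,\sinh R,
\]
with $h=(x_{n+1}-y_{n+1})\coth R/|x'-y'|$ and $\sigma=(x'-y')/|x'-y'|$, the kernel splits as $\log|x'-y'|+\log\sinh R+\log|h-\omega\cdot\sigma|$. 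Since $|\partial B|=2\pi\sinh R$ and $d\sigma(\xi)=\sinh R\,d\omega$ cancel, and since $|h|<1$ by the same elementary estimate from the proof of Lemma \ref{aauh} (the inequality $D(a,b_{0})>0$), Lemma \ref{ooim} gives $\int_{S^{1}}\log|h-\omega\cdot\sigma|\,d\omega=-2\pi\log 2$. Hence
\[
k_{\ast}(x,y)=\log|x'-y'|+\log\frac{\sinh R}{2},
\]
which yields
\[
(PN_{\ast}f)(x)=2\pi\,(I_{\ast}f)(x)+\Big(\log\frac{\sinh R}{2}\Big)\intl_{B}f(y)\,dy.
\]

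\textbf{Step 3 (second expression, via spherical means).} Apply the integration formula (\ref{8gh}) with $n=2$ and $a(\tau)=\log|\tau-t|$:
\[
(N_{\ast}f)(\xi,t)=2\pi\intl_{1}^{\infty}(Mf)(\xi,\tau)\,\log|\tau-t|\,d\tau.
\]
Back-projecting, and again using $|\partial B|=2\pi\sinh R$, I obtain
\[
(PN_{\ast}f)(x)=\frac{1}{\sinh R}\intl_{\partial B}\intl_{1}^{\infty}(Mf)(\xi,\tau)\,\log|\tau-[\xi,x]|\,d\tau\,d\sigma(\xi).
\]

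\textbf{Step 4 (conclusion).} Equating the two expressions for $(PN_{\ast}f)(x)$ and dividing by $2\pi$ yields exactly (\ref{kukh}), with the constant
\[
c_f=-\frac{1}{2\pi}\Big(\log\frac{\sinh R}{2}\Big)\intl_{B}f(y)\,dy.
\]
The main (and only real) obstacle is verifying $|h|<1$ in the hyperbolic geometry so that Lemma \ref{ooim} is applicable; this is already handled by the $D(a,b_0)>0$ computation in the proof of Lemma \ref{aauh}, which carries over verbatim to $n=2$.
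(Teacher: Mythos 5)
Your proposal is correct and follows essentially the same route as the paper's own proof: both compute $(PN_{\ast}f)(x)$ once via the kernel $k_{\ast}(x,y)=\log|x'-y'|+\log\frac{\sinh R}{2}$ (the paper simply cites the computation from Lemma \ref{nn7s}, while you rederive it from the factorization and $|h|<1$ estimate of Lemma \ref{aauh} together with Lemma \ref{ooim}), and once via (\ref{8gh}) in terms of the spherical means, then equate the two expressions. The only difference is expository, not mathematical.
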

\begin{proof}  Let
$$
\left( N_{\ast}f\right)(\xi, t) =\intl_{B} f(y)\log|[\xi, y]-t|\,dy, \qquad (\xi, t)\in\partial B \times  (1,\infty).
$$
Changing the order of
integration,   we obtain
$$ (PN_{\ast} f)(x)=\intl_{B} f(y)\,k_{\ast} (x,y)\, dy,
$$ 
 $k_{\ast} (x,y= \log|x'-y'|+\log \, \sinh R -\log2$ (see the proof of Lemma \ref{nn7s})
 This gives
\be\label{90kh}
(PN_{\ast} f)(x)=2\pi\,(I_{\ast}f)(x)+\Big (\log \, \frac{\sinh  R}{2} \Big)\,\intl_{B} f(y)\, dy.\ee
On the other hand, by (\ref{8gh}), 
\be\label{m23h}
(PN_{\ast} f)(x)=\frac{1}{ \sinh  R} \,\intl_{\partial B} \intl_{1}^\infty (Mf)(\xi, \t) \log |\t -[\xi, x]|\, d\t d\xi.
\ee
Comparing (\ref{m23h})  with (\ref{90kh}),  we obtain (\ref{kukh}).
\end{proof}

Owing to (\ref{lazh}),
Lemma \ref{nn7h}  allows us complete  Theorem \ref{774h} as follows.$\smallskip$

\begin{theorem}
\label{Theorem n2.1h}  An infinitely differentiable function  $f$ supported
in the geodesic ball $B=\{x\in  \bbh^2: \dist (x,e_{3})<R\}$ can be reconstructed from its spherical means 
$\left(Mf\right)  \left(\xi,\t\right)$, $(\xi, t)\in \partial B \times (1,\infty)$,  by the formula

\begin{equation}\label {finh}
f\left(x\right)  =\frac{x_3}{2\pi\,  |x| \,\sinh R}\,\Delta_{x'}  \intl_{\partial B
}\int\limits_{1}^{\infty}\!\left(Mf\right)  \left(\xi,\t\right)  \log |\t-
[\xi, x ]|\,d\t\,d\xi.
\end{equation}
\end{theorem}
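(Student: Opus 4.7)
The plan is to apply the Euclidean Laplacian $\Delta_{x'}$ to both sides of the identity (\ref{kukh}) from Lemma \ref{nn7h}, following precisely the pattern that led from Lemma \ref{nn7s} to Theorem \ref{Theorem n2.1s2} in the spherical setting. The correction term $c_f$ on the right-hand side of (\ref{kukh}) depends only on $R$ and on $\intl_B f(y)\,dy$; in particular, it is independent of $x$, so $\Delta_{x'}c_f = 0$ and the constant disappears. On the left-hand side, formula (\ref{lazh}) already provides $\Delta_{x'}(I_*f)(x) = |x|\,f(x)/x_3$, so after multiplying through by $x_3/|x|$ I recover $f(x)$ itself.

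The second ingredient is the explicit value of $|\partial B|$ for the geodesic circle of radius $R$ in $\bbh^2$. Using the parametrization $\xi = \omega\sinh R + e_3\cosh R$ with $\omega\in S^1$, the hyperbolic metric restricted to $\partial B$ reduces to the Euclidean arc-length of the circle $\omega\sinh R$, so $|\partial B| = 2\pi\sinh R$. Substituting this value into the factor $1/|\partial B|$ appearing on the right-hand side of (\ref{kukh}) produces the prefactor $1/(2\pi\sinh R)$ required in (\ref{finh}). Combining this with the previous paragraph yields
\begin{equation*}
f(x)=\frac{x_3}{2\pi\,|x|\,\sinh R}\,\Delta_{x'}\intl_{\partial B}\intl_1^{\infty}(Mf)(\xi,\tau)\,\log|\tau-[\xi,x]|\,d\tau\,d\xi,
\end{equation*}
which is exactly (\ref{finh}).

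The only point requiring brief justification is the legitimacy of interchanging $\Delta_{x'}$ with the double integral. Since $f\in C^\infty(\bbh^2)$ and $\supp f$ is strictly inside $B$, the spherical mean $(Mf)(\xi,\tau)$ is smooth in $\tau$ uniformly in $\xi$ and is supported in a compact subinterval of $(1,\cosh 2R)$ that is separated from the values $\tau=[\xi,x]$ arising for $x$ in any compact subset of $B$. Consequently, the integrand $\log|\tau-[\xi,x]|$ is jointly smooth in $(\xi,\tau,x)$ on the region that actually contributes, and differentiation under the integral is elementary. I do not anticipate any genuine technical obstacle: the theorem is essentially a bookkeeping corollary of Lemma \ref{nn7h} together with (\ref{lazh}) and the area formula $|\partial B| = 2\pi\sinh R$, exactly parallel to how Theorem \ref{Theorem n2.1s2} was deduced in the spherical setting.
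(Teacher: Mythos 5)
Your argument is correct and coincides with the paper's own proof: apply $\Delta_{x'}$ to both sides of (\ref{kukh}), use (\ref{lazh}) on the left, kill the constant $c_f$, and insert $|\partial B|=2\pi\sinh R$ (which the paper uses implicitly when the factor $1/\sinh R$ appears in (\ref{m23h})).

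One caveat: your final paragraph is both unnecessary and incorrectly justified. No interchange of $\Delta_{x'}$ with the double integral is needed, because in (\ref{finh}) the Laplacian sits \emph{outside} the integral, and the double integral is automatically a smooth function of $x$ --- by (\ref{kukh}) itself it equals $|\partial B|\left[(I_{*}f)(x)-c_f\right]$, the logarithmic potential of a smooth compactly supported density. Moreover, the separation claim you make is false: for $x\in\supp f$ and $\xi\in\partial B$, the value $\t=[\xi,x]$ lies inside the support of $(Mf)(\xi,\cdot)$, since the geodesic circle centered at $\xi$ passing through $x$ meets $\supp f$; so the logarithmic singularity genuinely occurs within the region of integration, and had the theorem required moving $\Delta_{x'}$ under the integral sign, your justification would not suffice.
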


\begin {remark}  As in Section
 \ref{45er}, Theorems \ref{774h}  and \ref{Theorem n2.1h}  can be applied to solution of inverse problems for the EPD equation in the hyperbolic space. The reasoning follows the same lines as before.  We leave it to the interested reader.
\end {remark}

\section {Appendix: Proof of Lemma \ref   {lem3}} \label {appx}
It is convenient to split the proof in two parts.

\noindent (i) We recall the notation
\be\label {denz}
g_\a (h)=\frac{1}{\Gam (\a/2)} \intl_{-1}^1 |t \!-\!h|^{\a -1}\,(1-t^2)^{(n-3)/2}\, dt , \qquad  Re\, \a >0,\ee
where $n>2$ and $ |h|\le 1-\del,\; \del>0$.  Changing variables $t=2\t -1$, $h=2\xi -1$, we write 
$g_\a (h)\equiv G_\a ((1+h)/2)$, where 
 \bea
G_\a (\xi)&=&\frac{2^{\a+n-3}} {\Gam(\a/2)}\intl_{0}^1  |\t \!-\!\xi|^{\a -1}\,(1-\t)^{(n-3)/2}\,\t^{(n-3)/2}\,  dt\nonumber\\
 &=&U_\a (\xi)+U_\a (1-\xi), \qquad \del/2\le\xi \le 1-\del/2,\label{908b}\eea
 $$
 U_\a (\xi)=\frac{2^{\a+n-3}} {\Gam(\a/2)}\intl_0^\xi  (\xi -\t)^{\a -1} \t^{(n-3)/2} (1-\t)^{(n-3)/2}\, d\t.$$
 The last integral expresses through the Gauss hypergeometric function so that
 $ U_\a (\xi)=a_\xi (\a)\, b(\a) \, \z_\a (\xi)$, where
  $$a_\xi (\a)=2^{\a+n-3}\,\xi^{(n-3)/2 +\a}\, \Gam ((n-1)/2) , \qquad  b(\a)=\frac {\Gam (\a)}{\Gam (\a/2)},$$ 
 $$
 \z_\a (\xi)=\frac{1}{\Gam (\a+(n-1)/2)}\, F\left (\frac{n-1}{2}, \frac{3-n}{2}; \frac{n-1}{2}+ \a; \,\xi\right ); $$
 see, e.g., \cite[2.2.6(1)]{PBM}.   Owing to \cite [2.1.6]{Er}, 
  $\z_\a (\xi)$ extends as an entire function of $\a$,  which is represented by an  absolutely convergent power series. Since
   $\del/2\le\xi \le 1-\del/2$, this series converges  uniformly in $\a  \in  K$ for any compact subset $K$ of the complex plane. Furthermore,  $a_\xi (\a)$ is also an entire function and  $b(\a)$  is meromorphic with the only poles $-1, -3, -5, \ldots$. Since $g_\a$ is an even function, i.e.,  $g_\a (h)=g_\a (-h)$, these poles are eventually removable. 
   Hence, $G_\a (\xi)$ extends to all complex $\a$  as an entire function of $\a$ and this extension represents a $C^\infty$ function of $\xi$  uniformly in $\a\in K$. This gives the desired result for $g_\a (h)$.
 
\noindent (ii)  To compute analytic continuation of $g_\a$ at $\a=3-n$,  first, we assume  $1/2<Re\,\a<1$ 
and $|Im \,\a|<1$ 
 and represent $G_\a (\xi)$  as a Mellin convolution
\be\label{conv}
 G_\a (\xi)\!=\frac{2^{\a+n-3}}{\Gam(\a/2)} \, f(\xi), \quad
 f(\xi)=\intl_{0}^\infty  
f_1(\tau)f_2\left(\frac{\xi}{\tau}\right)\frac{d\tau}{\tau}, 
\ee
 where
\[
f_1(\tau)\!=\! \left\{ \!
 \begin{array} {ll} \tau^{\a+(n-3)/2}(1\!- \!\tau)^{(n-3)/2} & \mbox{if $0\!<\!\tau\!<\!1$,}\\
 {}\\
0,& \mbox{if $1\!<\!\tau\!<\!\infty$,}\\
 \end{array}\
\right.\qquad  f_2(\tau)=|1\!-\!\tau|^{\a-1}.
 \]
  The   Mellin transforms $\tilde f_j(s)\!=\!\int_{0}^\infty  \!f_j(\tau)\tau^{s-1}d\tau$  ($j\!=\!1,2$) are   
  evaluated as  
    $$
 \tilde f_1(s)=\frac{\Gam(s+\a+(n-3)/2)\Gam((n-1)/2)}{\Gam(s+\a+n-2)},\qquad Re\, s>\frac{3-n}{2}-Re\, \a\, ,
$$
$$
\tilde f_2(s)=\frac{\Gam(s)\Gam(\a)}{\Gam(s+\a)}+\frac{\Gam(1-s-\a)\Gam(\a)}{\Gam(1-s)}, \qquad 0< Re\, s<1-Re\, \a.
$$
By applying the convolution theorem  and the relevant Mellin inversion formula \cite{T}, we obtain
 $$
f(\xi)=\frac{1}{2\pi i} \intl_{\kappa -i\infty}^{\kappa +i\infty}  
\tilde f(s)\,  \xi^{-s}\,ds,  \qquad 0 <\kappa<1-Re\,\a,
$$ 
where $\tilde f (s)=\tilde f_1(s)\tilde f_2(s)$.  The function $\tilde f (s)$  has  poles in the half-plane $Re\, s<\kappa$
at the points $s=-j$ and $s=-j-\a-(n-3)/2$, $j=0,1,2,\ldots$.  Since $1/2<Re\,\a<1$,  all these poles are simple,  and 
 the Cauchy residue theorem yields
 \bea 
&&\! \! \! \! \! \! \! f(\xi)=\Gamma\left(\frac{n-1}{2}\right) \, \Gamma(\a) \sum_{j=0}^\infty
\frac{(-\xi)^j}{j!}\left [\frac{\Gamma(\a-j+(n-3)/2)}
{\Gamma(\a-j)\Gamma(\a-j+n-2)}\right .\nonumber\\
&&\! \! \! \!\! \!  \!+\left .\frac{ \xi^{\a+(n-3)/2}}
{\Gam((n-1)/2-j)}\left (\frac{\Gam(-\a+(3-n)/2-j)}{\Gam((3-n)/2-j)}
+\frac{\Gam((n-1)/2+j)}
{\Gam(\a+(n-1)/2+j)}\right )\right].\nonumber\eea
 Ultimately,
we arrive at the following expression for  $G_\a (\xi)$: 
 \bea\label {Gauss}
G_\a (\xi)&=&\lam_1 \,F\left(1-\a,3-\a-n;\frac{5-n}{2}-\a;\xi\right)\\&+&\lam_2 \,
 F\left(\frac{3-n}{2},\frac{n-1}{2};\frac{n-1}{2}+\a;\xi\right),\nonumber\eea
 where
\bea
\lam_1 &=&\frac{\Gam((n-1)/2)}{2^{3-\a-n}\Gam(\a/2)} 
\, \frac{(-1)^n\,\Gam(3-\a-n)\, \sin\a\pi}{\Gam((5-n)/2-\a)\,\cos\,(\a+n/2)\pi},\nonumber\\
\lam_2 &=&\frac{\Gam((n-1)/2)}{2^{3-\a-n}\Gam(\a/2)}\, 
 \frac{\xi^{\a+(n-3)/2}\,\Gam(\a)}{\Gam(\a+(n-1)/2)}\, \left(1+\frac{\cos \,n\pi/2}{\cos\,(\a+n/2)\pi}\right).\nonumber\eea 

Case 1. Let $n=2m$, $m=2,3,\ldots$. Then
\be\label {even1}
 G_\a (\xi)\!=\frac{\pi\,\Gam(m-1/2)}{2^{3-\a-2m}\, \Gam(\a/2)\,\cos\,\a\pi}\,[D_1(\xi;\a)+D_2(\xi;\a)],
 \ee
 where
 \bea
 D_1(\xi;\a)&=&
 \frac{ F(1-\a,3-\a-2m;5/2-\a-m;\xi)}{(-1)^m\Gam(\a+2m-2)\,\Gam(5/2-\a-m)},
 \nonumber\\
 D_2(\xi;\a)&=&
\frac{\cot (\a\pi/2)\,F(3/2-m,m-1/2,\a+m-1/2;\xi)}{\xi^{3/2-\a-m}\,\Gam(1-\a)\,\Gam(\a+m-1/2)}.\nonumber\eea 
A simple computation yields
$
\underset
{\a=3-2m}{a.c.}\, G_\a (\xi)\!=\!\Gam(m\!-\!1/2)=\Gam((n\!-\!1)/2).
$

Case 2. Let  $n=2m+1$, $m=1,2,\ldots$. Then
\be\label {odd1}
G_\a (\xi)\!=\frac{\Gam(m)\,\Gam(1-\a/2)}
{2^{2-\a-2m}\,\cos \, (\a\pi/2)}\, [E_1(\xi;\a)+E_2(\xi;\a)],
\ee
 where
\bea
 E_1(\xi;\a)&=&\frac{
F(1-\a,2-\a-2m;2-\a-m;\xi)}{(-1)^{m+1}\,\Gam(\a-1+2m)\,\Gam(2-\a-m)},
  \nonumber\\
  E_2(\xi;\a)&=&\frac{\xi^{\a+m-1}F(1-m,m;\a+m;\xi)}{\Gam(1-\a)\,\Gam(\a+m)}.
 \nonumber\eea 
Passing to the limit as $\a\to 2-2m$, we obtain $\underset
{\a=2-2m}{a.c.}\, G_\a (\xi)=\Gam (m)=\Gam((n\!-\!1)/2)$. This completes the proof. ${} \qquad \qquad \hfill \square$
 
 \begin{remark} The basic equality  (\ref {Gauss}) can be proved in a different way if we  rearrange  hypergeometric functions in (\ref{908b}) using known formulas.
  Specifically, the second term in (\ref{908b})  can be transformed by formulas (33), (6),  and (21) from \cite [Section 2.9]{Er}. This gives 
 $$ U_\a (1-\xi)= \frac{2^{\a+n-3}} {\Gam(\a/2)}\,B\left (\frac{n-1}{2}, \a\right )\, (A(\xi) +B(\xi)),$$
 \bea
 A(\xi)&=&\gam_1 (\a)\, F\left (1-\a, 3-n-\a; \frac{5-n}{2}- \a; \,\xi\right ),\nonumber\\
  B(\xi)&=& (\xi (1-\xi))^{\a+(n-3)/2} \gam_2 (\a)\, F\left (\a, \a+n-2; \frac{n-1}{2}+ \a; \,\xi\right ),\nonumber\eea
\bea
 \gam_1 (\a)&=&\frac{\Gam (\a+(n-1)/2)\,\Gam (\a+(n-3)/2)}{\Gam (\a)\, \Gam (\a+n-2)}, \nonumber\\
 \gam_2 (\a)&=&\frac{\Gam (\a+(n-1)/2)\,\Gam ((3-n)/2-\a)}{\Gam ((n-1)/2)\, \Gam ((3-n)/2)}=
 \frac{\sin (n-1)/2) \pi}{\sin (n-1)/2+\a) \pi} 
 .\nonumber\eea
  Owing to  \cite[2.9(2)]  {Er},
  $$
  B(\xi)=\gam_2 (\a) \, \xi^{(n-3)/2 +\a}\, F\left (\frac{n-1}{2}, \frac{3-n}{2}; \frac{n-1}{2}+ \a; \,\xi\right ).$$
  This gives (\ref{Gauss}).
  \end{remark}
  
 {\bf An alternative proof of (ii)}  The following alternative proof is  instructive and leads to the same result. 
  In fact,  it suffices to prove the equality
   \be\label{9dr}
\underset
{\a=3-n}{a.c.}\, g_\a (h)= \Gam ((n-1)/2)\ee
in the weak sense. Indeed,   suppose that  
 \be\label {aw2}
  \underset
{\a=3-n}{a.c.}\, (g_\a,\psi)\equiv \underset
{\a=3-n}{a.c.}\intl_\bbr g_\a (h) \,\psi (h)\, dh= \Gam ((n-1)/2) \intl_\bbr \psi (h)\, dh\ee
 for any $C^\infty$ function $\psi$ with compact support in the interval $(-1,1)$.
 Since, by 
  Part (i),  the analytic continuation of $g_\a (h)$  represents a $C^\infty$ function of $h$ uniformly in $\a\in K$ for any compact subset $K$ of the complex plane, then (use, e.g., \cite[Lemma 1.17 ]{Ru96})  $$\underset
{\a=3-n}{a.c.}\, (g_\a,\psi)=(\underset
{\a=3-n}{a.c.}\, g_\a,\psi)$$ and  (\ref{aw2}) yields $(\underset
{\a=3-n}{a.c.}\, g_\a,\psi)=\Gam ((n-1)/2)\, (1, \psi)$. This implies (\ref{9dr}). 

Let us prove (\ref{aw2}).  We denote
$$
\rho_\a (t)=\frac{|t|^{\a -1}}{\Gam ( \a/2)} , \qquad  \om (t)=(1-t^2)_+^{(n-3)/2},$$
where $(\cdot)_+$  stands for zero when the expression in brackets is non-positive.  We interpret these functions as $\D'$-distributions on $\bbr$. Then $g_\a$ is a convolution of $ \rho_\a$ with the compactly supported distribution $\om$   so that
$(g_\a,\psi)=(\rho_\a (s), (\om (t), \psi (s+t)))$; see, \cite[Ch. I, Sec. 4(2)]{GSh1}. 
 
 If $n$ is odd,  $n=2m+3$, $m=0,1, \ldots$, then (\ref {lab1c}) yields
\bea
 \underset
{\a=3-n}{a.c.}\, (g_\a,\psi)&=&\underset
{\a=-2m}{a.c.} (g_\a,\psi)=c_{m,1}\,\left (\frac{d}{ds}\right )^{2m} \, (\om (t),  \psi (s+t)) \Big |_{s=0}\nonumber\\
&=&c_{m,1}\,(\om,  \psi^{(2m)})= c_{m,1}\,([(1-t^2)^{m}]^{(2m)},  \psi(t))\nonumber\\
&=&(-1)^m c_{m,1}\, (2m)!=m!=\Gam ((n-1)/2).
\nonumber\eea

 Let now $n$ be  even. Since the convolution is commutative,
  $${a.c.}\, (g_\a,\psi)=a.c.\, (\om (t), (\rho_\a (s), \psi (s+t)))= (\om (t), a.c.\, (\rho_\a (s), \psi (s+t))).$$
 If $n=2m+2$ ($m=1,2, \ldots$), then, applying  (\ref{lab1z})  and changing variables, we have
 \bea
&& \underset
{\a=3-n}{a.c.}\, (g_\a,\psi)=\underset
{\a=1-2m}{a.c.} (g_\a,\psi)=c_{m,2}\,   \Big ( \om (t), \mbox {\rm p.v.} \!\intl_\bbr \frac{\psi^{(2m-1)} (s+t)}{s}\, ds
\Big )\nonumber\\
&=&(-1)^{m+1} c_{m,2} \intl_{-1}^1 \psi^{(2m-1)} (h)\, q(h) \,dh=(-1)^{m} c_{m,2}\intl_{-1}^1 \psi (h)\, q^{(2m-1)} (h) \,dh,\nonumber\eea
$$
c_{m,2}=\frac{1}{\Gam (1/2 -m) \,(2m-1)!}, \qquad q(h)=\mbox {\rm p.v.} \! \intl_{-1}^1 \frac{ (t^2\!-\!1)^{m}}
{(t-h)\, \sqrt{1-t^2}}\,dt$$
(interchange of the order of integration  can be  justified if we complete $[-1,1]$ to a closed contour and use  \cite[Section 7.1]{Gak}). Now, $ q(h)$ is a polynomial with leading term $ \pi h^{2m-1}$. This follows from the well known relation for Chebyshev polynomials
\be\label {pvn}
\mbox{\rm p.v.}\intl_{-1}^1  \frac{T_n(t)\, dt}{(t-h)\, \sqrt{1-t^2}}=\pi U_{n-1}(h), \quad -1<h<1,
\ee
and the fact that the leading terms of $T_n(h)$ and $U_n(h)$
are $2^{n-1}h^n$ and $2^n h^n$, respectively;  see formulas 10.11(47), 10.11(22),  and 10.11(23) in \cite[vol. II)]{Er}.
Hence integration by parts yields
$$
 \underset
{\a=3-n}{a.c.}\, (g_\a,\psi)=(-1)^{m} c_{m,2}\,\pi \, (2m-1)! \intl_{-1}^1  \psi (h)\,dh=\Gam (m+1/2) \intl_{-1}^1  \psi (h)\,dh,$$
where $\Gam (m+1/2)=\Gam ((n-1)/2)$. Thus, we are done. $\hfill \square$

\bibliographystyle{amsplain}

\end{document}